\newcommand\R{\mathbb{R}}
\renewcommand\L{\mathcal{L}}
\def\u{\bar{u}}
\def\v{\bar{v}}
\def\bxi{{\bar{\xi}}}
\newtheorem{theorem}{Theorem}
\newtheorem{cor}[theorem]{Corollary}
\newtheorem{lemma}[theorem]{Lemma}
\newtheorem{prop}[theorem]{Proposition}
\numberwithin{equation}{section}
\numberwithin{theorem}{section}
\numberwithin{figure}{section}
\theoremstyle{remark}
\newtheorem{rem}[theorem]{Remark}
\begin{document}

\title[Nonlocal model of pattern formation]{Dynamical spike solutions in a nonlocal model \\ of pattern formation}

\author[A.~Marciniak-Czochra]{Anna Marciniak-Czochra}
\address[A. Marciniak-Czochra]{
Institute of Applied Mathematics,  Interdisciplinary Center for Scientific Computing (IWR) and BIOQUANT, University of Heidelberg, 69120 Heidelberg, Germany}
\email{anna.marciniak@iwr.uni-heidelberg.de}
\urladdr {http://www.biostruct.uni-hd.de/}

\author[S.~H\"arting]{Steffen H\"arting}
\address[S. H\"arting]{
Institute of Applied Mathematics,  University of Heidelberg, 69120 Heidelberg, Germany}
\email{steffen.haerting@bioquant.uni-heidelberg.de}

\author[G.~Karch]{Grzegorz Karch}
\address[G. Karch]{
 Instytut Matematyczny, Uniwersytet Wroc\l awski,
 pl. Grunwaldzki 2/4, 50-384 Wroc\-\l aw, Poland}
\email{grzegorz.karch@math.uni.wroc.pl}
\urladdr {http://www.math.uni.wroc.pl/~karch}

\author[K.~Suzuki]{Kanako Suzuki}
\address[K. Suzuki]{
College of Science, Ibaraki University,
2-1-1 Bunkyo, Mito 310-8512, Japan}
\email{kanako.suzuki.sci2@vc.ibaraki.ac.jp}

\date{\today}


\begin{abstract}
%
%
%
Coupling a reaction-diffusion
equation with ordinary differential equations (ODE) may lead to diffusion-driven instability (DDI) which, in contrast to the classical reaction-diffusion models, causes destabilization of both, constant
solutions and Turing patterns. Using a shadow-type  limit of a reaction-diffusion-ODE model, we show that in such cases the instability driven by  nonlocal terms (a counterpart of DDI) may lead to formation of unbounded 
 spike patterns.

{\bf Keywords:} Shadow system, reaction-diffusion-ODE equations; diffusion-driven instability; unbounded spike patterns.

\end{abstract}
\maketitle

\section{Introduction}
\subsection{Reaction-diffusion-ODE systems}

Classical models of pattern formation are based on {\it diffusion-driven
instability} (DDI, Turing instability) of constant stationary solutions of reaction-diffusion equations,
which leads to emergence of stable Turing patterns formed
around that equilibrium. The emerging patterns can be spatially monotone or spatially periodic. Their shape depends on the ratio of diffusion coefficients as well as on a scaling coefficient which reflects the relationship between the diffusion coefficients and  the domain size \cite{Murray2}. 

Interestingly, a variety of possible patterns increases when setting the smaller diffusion coefficient to zero, {\it i.e.}~considering a single reaction-diffusion equation coupled to an ordinary differential equation (ODE)
 \begin{equation}\label{RD}
u_t  =   f(u,v), \qquad v_t  =   D \Delta v+g(u,v),
\end{equation}
supplemented with the Neumann boundary condition for $v=v(x,t)$.

Such models arise from coupling of the diffusive processes with processes which are localized in space, such as, for example, growth processes \cite{MCK06,MCK07,MCK08,Pham} or intracellular signaling  \cite{Hock,Klika,MC03,USOO}. In the latter case, macroscopic reaction-diffusion-ODE models have been derived as a homogenization limit of the models describing coupling of cell-localized processes with a cell-to-cell communication through a diffusion in a cell assembly \cite{MCP,MC12}. 

The dynamics of such models appears to be very different from that of classical reaction-diffusion models. Systems coupling a single reaction-diffusion equation with ODEs may exhibit DDI as shown in \cite{MC03} and discussed more recently on several examples from mathematical biology in \cite{Klika}. However, in this case all Turing patterns are unstable, {\it i.e.}~the same mechanism which destabilizes constant solutions, destabilizes also all continuous spatially heterogenous stationary solutions  \cite{MKS12,MKS16,LMCTW}.  The question then arises as to which patterns, if any, can be exhibited in such models. Two scenarios have been observed numerically: either a convergence to stable stationary patterns with jump-discontinuity \cite{HMCT16,Hock} or 
  an emergence of  multimodal dynamical spike structures  \cite{HMC13,Pham}.

In the first case, solutions of the model are uniformly bounded and they converge to a {\it far from equilibrium} patterns with jump discontinuity which results from the existence of multiple quasi-stationary solutions in the ODE subsystems. The hysteresis  in the location of  stable branches of those solutions allows 
the authors of Ref.~\cite{HMCT16}
to construct a continuum of stationary solutions with jump discontinuity, which may be either monotone or periodic or
irregular. 
In Ref.~\cite{HMCT16}, conditions for linear stability of such patterns in a  topology excluding the discontinuity points  have been provided. Then, the emergence and stability of the patterns with jump discontinuity has been proved for a receptor-ligand model with DDI and hysteresis.

\subsection{Problem formulation}

The goal of this paper is to understand the ``spiky'' unbounded  patterns  emerging in the second class of models \cite{HMC13,MCK07,MCK08,Pham}. We consider this problem in a particular case  of  a model coupling cell growth with receptor-ligand dynamics, which was proposed in \cite{MCK07,MCK08} as a three-equation model and reduced to a reaction-diffusion-ODE system of the form \eqref{RD} in \cite{HMC13,MKS12}. 

So far, analysis of the model showed instability of all stationary solutions, including the solutions with jump-discontinuity \cite{MKS12,MKS16}, while model simulations indicated emergence of dynamical spike patterns \cite{HMC13}. To understand the underlying dynamics, we propose to further reduce the model and focus on its shadow-limit. It results in the following nonlocal system with two unknown nonnegative functions
$u=u(x,t)$ and $\xi=\xi(t)$, 
 describing an evolution of a density of growing cells and a uniformly distributed  growth factor, respectively,
\begin{align}
u_t&=\Big(\frac{a u\xi}{1+u\xi} -d\Big) u &\text{for}& \quad x\in \Omega, \; t>0, \label{eqsc1}\\
\xi_t &=  - \xi - \xi \int_\Omega u^2 \;dx  +\kappa_0 &
\text{for}&\quad   t>0. \label{eqsc2}
\end{align}
Here, $a,d,\kappa_0$ are positive constants,
 $\Omega\subset \R^n$ is an arbitrary bounded measurable set of a finite nonzero measure and we  assume that $|\Omega|=1$, without loss of generality. 
Moreover, we supplement  equations \eqref{eqsc1}-\eqref{eqsc2}
with nonnegative initial conditions
\begin{equation}\label{inisc}
u(0,x)=u_0(x), \qquad 
\xi(0) = \xi_0.
\end{equation}

The reduction of a reaction-diffusion model to a nonlocal problem can be obtained  after passing with the diffusion coefficient $D$ in second equation
 to the limit $D\to \infty $, see Theorem \ref{thm:A} in Appendix  \ref{ap:B} for a rigorous
proof of this claim. Shadow-limit has attracted a considerable interest in the literature in the case where the first equation is a quasilinear parabolic equation, starting from the papers of Keener \cite{Keener78}, and Hale and Sakamoto \cite{HaleSakamoto89}. An approach using semigroup convergence has been recently undertaken by Bobrowski \cite{Bobrowski}. Here, we apply the shadow limit to reduce a reaction-diffusion-ODE model  \eqref{RD} to a system of integro-differential equations \eqref{eqsc1}-\eqref{inisc}.  

\subsection{Content of the paper} In this work we focus on the asymptotic behavior of the solutions of problem problem \eqref{eqsc1}--\eqref{inisc}. 

In the first steps, we show  the following results on the nonlocal initial value problem \eqref{eqsc1}-\eqref{inisc} where,  to streamline the
 analysis, we assume that $u_0 \in C (\overline\Omega)$ as well as $u_0(x)\geq 0$ for all $x\in \overline\Omega$ and $\xi_0\geq 0$.
\begin{itemize}
\item Problem  \eqref{eqsc1}--\eqref{inisc} has a unique global-in-time nonnegative solution for every nonnegative initial condition \eqref{inisc}, see Proposition 
\ref{thm:existence:carc}, below.
\item The ``total mass'' $\int_\Omega u(x,t)\,dx$ of every nonnegative solution 
to problem \eqref{eqsc1}--\eqref{inisc}
is bounded on $[0,\infty)$ (Proposition 
\ref{thm:existence:carc}).
\item Each $x$-independent solution of problem 
\eqref{eqsc1}--\eqref{inisc} is bounded on the half-line $[0,\infty)$,
see Proposition \ref{thm:kinetic}.
\item {\it All} stationary solutions of problem  \eqref{eqsc1}--\eqref{inisc} are unstable (Section \ref{sec:stat}).
\end{itemize}

Our main result (Theorem~\ref{thm:growth}) shows that the nonlocal coupling may lead to a loss of boundedness of solutions. More specifically, we prove that although space homogeneous solutions of the model are uniformly bounded in time, there exist space heterogeneous solutions with an unbounded growth as $t\to \infty$, as depicted in Fig.~\ref{figure}.


{

\begin{figure}
\begin{tabular}{p{18em}p{18em}}
  {\small
    \centering
\begingroup
  \makeatletter
  \providecommand\color[2][]{%
    \GenericError{(gnuplot) \space\space\space\@spaces}{%
      Package color not loaded in conjunction with
      terminal option `colourtext'%
    }{See the gnuplot documentation for explanation.%
    }{Either use 'blacktext' in gnuplot or load the package
      color.sty in LaTeX.}%
    \renewcommand\color[2][]{}%
  }%
  \providecommand\includegraphics[2][]{%
    \GenericError{(gnuplot) \space\space\space\@spaces}{%
      Package graphicx or graphics not loaded%
    }{See the gnuplot documentation for explanation.%
    }{The gnuplot epslatex terminal needs graphicx.sty or graphics.sty.}%
    \renewcommand\includegraphics[2][]{}%
  }%
  \providecommand\rotatebox[2]{#2}%
  \@ifundefined{ifGPcolor}{%
    \newif\ifGPcolor
    \GPcolorfalse
  }{}%
  \@ifundefined{ifGPblacktext}{%
    \newif\ifGPblacktext
    \GPblacktexttrue
  }{}%
  \let\gplgaddtomacro\g@addto@macro
  \gdef\gplbacktext{}%
  \gdef\gplfronttext{}%
  \makeatother
  \ifGPblacktext
    \def\colorrgb#1{}%
    \def\colorgray#1{}%
  \else
    \ifGPcolor
      \def\colorrgb#1{\color[rgb]{#1}}%
      \def\colorgray#1{\color[gray]{#1}}%
      \expandafter\def\csname LTw\endcsname{\color{white}}%
      \expandafter\def\csname LTb\endcsname{\color{black}}%
      \expandafter\def\csname LTa\endcsname{\color{black}}%
      \expandafter\def\csname LT0\endcsname{\color[rgb]{1,0,0}}%
      \expandafter\def\csname LT1\endcsname{\color[rgb]{0,1,0}}%
      \expandafter\def\csname LT2\endcsname{\color[rgb]{0,0,1}}%
      \expandafter\def\csname LT3\endcsname{\color[rgb]{1,0,1}}%
      \expandafter\def\csname LT4\endcsname{\color[rgb]{0,1,1}}%
      \expandafter\def\csname LT5\endcsname{\color[rgb]{1,1,0}}%
      \expandafter\def\csname LT6\endcsname{\color[rgb]{0,0,0}}%
      \expandafter\def\csname LT7\endcsname{\color[rgb]{1,0.3,0}}%
      \expandafter\def\csname LT8\endcsname{\color[rgb]{0.5,0.5,0.5}}%
    \else
      \def\colorrgb#1{\color{black}}%
      \def\colorgray#1{\color[gray]{#1}}%
      \expandafter\def\csname LTw\endcsname{\color{white}}%
      \expandafter\def\csname LTb\endcsname{\color{black}}%
      \expandafter\def\csname LTa\endcsname{\color{black}}%
      \expandafter\def\csname LT0\endcsname{\color{black}}%
      \expandafter\def\csname LT1\endcsname{\color{black}}%
      \expandafter\def\csname LT2\endcsname{\color{black}}%
      \expandafter\def\csname LT3\endcsname{\color{black}}%
      \expandafter\def\csname LT4\endcsname{\color{black}}%
      \expandafter\def\csname LT5\endcsname{\color{black}}%
      \expandafter\def\csname LT6\endcsname{\color{black}}%
      \expandafter\def\csname LT7\endcsname{\color{black}}%
      \expandafter\def\csname LT8\endcsname{\color{black}}%
    \fi
  \fi
  \setlength{\unitlength}{0.0500bp}%
  \begin{picture}(3600.00,3528.00)%
    \gplgaddtomacro\gplbacktext{%
    }%
    \gplgaddtomacro\gplfronttext{%
      \csname LTb\endcsname%
      \put(507,1142){\makebox(0,0){\strut{} 0}}%
      \put(1102,835){\makebox(0,0){\strut{} 10}}%
      \put(1697,567){\makebox(0,0){\strut{} 20}}%
      \put(908,779){\makebox(0,0){\strut{}$t$}}%
      \put(1893,615){\makebox(0,0){\strut{} 0}}%
      \put(3083,1150){\makebox(0,0){\strut{} 1}}%
      \put(2692,819){\makebox(0,0){\strut{}$x$}}%
      \put(484,1656){\makebox(0,0)[r]{\strut{} 0}}%
      \put(484,1905){\makebox(0,0)[r]{\strut{} 40}}%
      \put(484,2154){\makebox(0,0)[r]{\strut{} 80}}%
      \put(484,2404){\makebox(0,0)[r]{\strut{} 120}}%
    }%
    \gplbacktext
    \put(0,0){\includegraphics{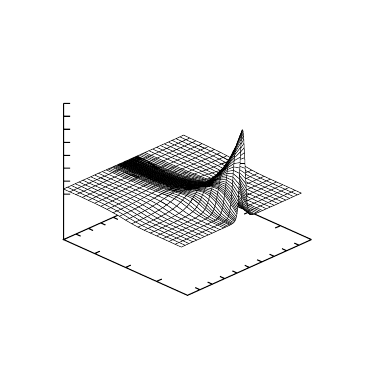}}%
    \gplfronttext
  \end{picture}%
\endgroup
}

&

{\small
  \centering
\begingroup
  \makeatletter
  \providecommand\color[2][]{%
    \GenericError{(gnuplot) \space\space\space\@spaces}{%
      Package color not loaded in conjunction with
      terminal option `colourtext'%
    }{See the gnuplot documentation for explanation.%
    }{Either use 'blacktext' in gnuplot or load the package
      color.sty in LaTeX.}%
    \renewcommand\color[2][]{}%
  }%
  \providecommand\includegraphics[2][]{%
    \GenericError{(gnuplot) \space\space\space\@spaces}{%
      Package graphicx or graphics not loaded%
    }{See the gnuplot documentation for explanation.%
    }{The gnuplot epslatex terminal needs graphicx.sty or graphics.sty.}%
    \renewcommand\includegraphics[2][]{}%
  }%
  \providecommand\rotatebox[2]{#2}%
  \@ifundefined{ifGPcolor}{%
    \newif\ifGPcolor
    \GPcolorfalse
  }{}%
  \@ifundefined{ifGPblacktext}{%
    \newif\ifGPblacktext
    \GPblacktexttrue
  }{}%
  \let\gplgaddtomacro\g@addto@macro
  \gdef\gplbacktext{}%
  \gdef\gplfronttext{}%
  \makeatother
  \ifGPblacktext
    \def\colorrgb#1{}%
    \def\colorgray#1{}%
  \else
    \ifGPcolor
      \def\colorrgb#1{\color[rgb]{#1}}%
      \def\colorgray#1{\color[gray]{#1}}%
      \expandafter\def\csname LTw\endcsname{\color{white}}%
      \expandafter\def\csname LTb\endcsname{\color{black}}%
      \expandafter\def\csname LTa\endcsname{\color{black}}%
      \expandafter\def\csname LT0\endcsname{\color[rgb]{1,0,0}}%
      \expandafter\def\csname LT1\endcsname{\color[rgb]{0,1,0}}%
      \expandafter\def\csname LT2\endcsname{\color[rgb]{0,0,1}}%
      \expandafter\def\csname LT3\endcsname{\color[rgb]{1,0,1}}%
      \expandafter\def\csname LT4\endcsname{\color[rgb]{0,1,1}}%
      \expandafter\def\csname LT5\endcsname{\color[rgb]{1,1,0}}%
      \expandafter\def\csname LT6\endcsname{\color[rgb]{0,0,0}}%
      \expandafter\def\csname LT7\endcsname{\color[rgb]{1,0.3,0}}%
      \expandafter\def\csname LT8\endcsname{\color[rgb]{0.5,0.5,0.5}}%
    \else
      \def\colorrgb#1{\color{black}}%
      \def\colorgray#1{\color[gray]{#1}}%
      \expandafter\def\csname LTw\endcsname{\color{white}}%
      \expandafter\def\csname LTb\endcsname{\color{black}}%
      \expandafter\def\csname LTa\endcsname{\color{black}}%
      \expandafter\def\csname LT0\endcsname{\color{black}}%
      \expandafter\def\csname LT1\endcsname{\color{black}}%
      \expandafter\def\csname LT2\endcsname{\color{black}}%
      \expandafter\def\csname LT3\endcsname{\color{black}}%
      \expandafter\def\csname LT4\endcsname{\color{black}}%
      \expandafter\def\csname LT5\endcsname{\color{black}}%
      \expandafter\def\csname LT6\endcsname{\color{black}}%
      \expandafter\def\csname LT7\endcsname{\color{black}}%
      \expandafter\def\csname LT8\endcsname{\color{black}}%
    \fi
  \fi
  \setlength{\unitlength}{0.0500bp}%
  \begin{picture}(3600.00,3528.00)%
    \gplgaddtomacro\gplbacktext{%
    }%
    \gplgaddtomacro\gplfronttext{%
      \csname LTb\endcsname%
      \put(507,1142){\makebox(0,0){\strut{} 0}}%
      \put(824,959){\makebox(0,0){\strut{} 4}}%
      \put(1141,817){\makebox(0,0){\strut{} 8}}%
      \put(1459,674){\makebox(0,0){\strut{} 12}}%
      \put(908,779){\makebox(0,0){\strut{}$t$}}%
      \put(1893,615){\makebox(0,0){\strut{} 0}}%
      \put(3083,1150){\makebox(0,0){\strut{} 1}}%
      \put(2692,819){\makebox(0,0){\strut{}$x$}}%
      \put(484,1656){\makebox(0,0)[r]{\strut{} 0}}%
      \put(484,1946){\makebox(0,0)[r]{\strut{} 20}}%
      \put(484,2237){\makebox(0,0)[r]{\strut{} 40}}%
      \put(484,2528){\makebox(0,0)[r]{\strut{} 60}}%
    }%
    \gplbacktext
    \put(0,0){\includegraphics{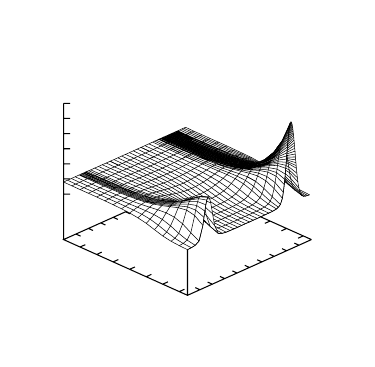}}%
    \gplfronttext
  \end{picture}%
\endgroup
}
\\
{\small
  \centering
\begingroup
  \makeatletter
  \providecommand\color[2][]{%
    \GenericError{(gnuplot) \space\space\space\@spaces}{%
      Package color not loaded in conjunction with
      terminal option `colourtext'%
    }{See the gnuplot documentation for explanation.%
    }{Either use 'blacktext' in gnuplot or load the package
      color.sty in LaTeX.}%
    \renewcommand\color[2][]{}%
  }%
  \providecommand\includegraphics[2][]{%
    \GenericError{(gnuplot) \space\space\space\@spaces}{%
      Package graphicx or graphics not loaded%
    }{See the gnuplot documentation for explanation.%
    }{The gnuplot epslatex terminal needs graphicx.sty or graphics.sty.}%
    \renewcommand\includegraphics[2][]{}%
  }%
  \providecommand\rotatebox[2]{#2}%
  \@ifundefined{ifGPcolor}{%
    \newif\ifGPcolor
    \GPcolorfalse
  }{}%
  \@ifundefined{ifGPblacktext}{%
    \newif\ifGPblacktext
    \GPblacktexttrue
  }{}%
  \let\gplgaddtomacro\g@addto@macro
  \gdef\gplbacktext{}%
  \gdef\gplfronttext{}%
  \makeatother
  \ifGPblacktext
    \def\colorrgb#1{}%
    \def\colorgray#1{}%
  \else
    \ifGPcolor
      \def\colorrgb#1{\color[rgb]{#1}}%
      \def\colorgray#1{\color[gray]{#1}}%
      \expandafter\def\csname LTw\endcsname{\color{white}}%
      \expandafter\def\csname LTb\endcsname{\color{black}}%
      \expandafter\def\csname LTa\endcsname{\color{black}}%
      \expandafter\def\csname LT0\endcsname{\color[rgb]{1,0,0}}%
      \expandafter\def\csname LT1\endcsname{\color[rgb]{0,1,0}}%
      \expandafter\def\csname LT2\endcsname{\color[rgb]{0,0,1}}%
      \expandafter\def\csname LT3\endcsname{\color[rgb]{1,0,1}}%
      \expandafter\def\csname LT4\endcsname{\color[rgb]{0,1,1}}%
      \expandafter\def\csname LT5\endcsname{\color[rgb]{1,1,0}}%
      \expandafter\def\csname LT6\endcsname{\color[rgb]{0,0,0}}%
      \expandafter\def\csname LT7\endcsname{\color[rgb]{1,0.3,0}}%
      \expandafter\def\csname LT8\endcsname{\color[rgb]{0.5,0.5,0.5}}%
    \else
      \def\colorrgb#1{\color{black}}%
      \def\colorgray#1{\color[gray]{#1}}%
      \expandafter\def\csname LTw\endcsname{\color{white}}%
      \expandafter\def\csname LTb\endcsname{\color{black}}%
      \expandafter\def\csname LTa\endcsname{\color{black}}%
      \expandafter\def\csname LT0\endcsname{\color{black}}%
      \expandafter\def\csname LT1\endcsname{\color{black}}%
      \expandafter\def\csname LT2\endcsname{\color{black}}%
      \expandafter\def\csname LT3\endcsname{\color{black}}%
      \expandafter\def\csname LT4\endcsname{\color{black}}%
      \expandafter\def\csname LT5\endcsname{\color{black}}%
      \expandafter\def\csname LT6\endcsname{\color{black}}%
      \expandafter\def\csname LT7\endcsname{\color{black}}%
      \expandafter\def\csname LT8\endcsname{\color{black}}%
    \fi
  \fi
  \setlength{\unitlength}{0.0500bp}%
  \begin{picture}(3600.00,3528.00)%
    \gplgaddtomacro\gplbacktext{%
    }%
    \gplgaddtomacro\gplfronttext{%
      \csname LTb\endcsname%
      \put(507,1142){\makebox(0,0){\strut{} 0}}%
      \put(824,959){\makebox(0,0){\strut{} 4}}%
      \put(1141,817){\makebox(0,0){\strut{} 8}}%
      \put(1459,674){\makebox(0,0){\strut{} 12}}%
      \put(908,779){\makebox(0,0){\strut{}$t$}}%
      \put(1893,615){\makebox(0,0){\strut{} 0}}%
      \put(3083,1150){\makebox(0,0){\strut{} 1}}%
      \put(2692,819){\makebox(0,0){\strut{}$x$}}%
      \put(484,1656){\makebox(0,0)[r]{\strut{} 0}}%
      \put(484,1850){\makebox(0,0)[r]{\strut{} 10}}%
      \put(484,2043){\makebox(0,0)[r]{\strut{} 20}}%
      \put(484,2237){\makebox(0,0)[r]{\strut{} 30}}%
      \put(484,2431){\makebox(0,0)[r]{\strut{} 40}}%
    }%
    \gplbacktext
    \put(0,0){\includegraphics{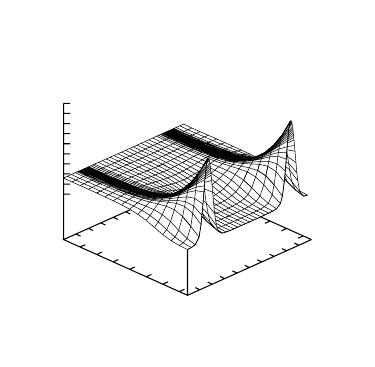}}%
    \gplfronttext
  \end{picture}%
\endgroup
}

&

{\small
  \centering
\begingroup
  \makeatletter
  \providecommand\color[2][]{%
    \GenericError{(gnuplot) \space\space\space\@spaces}{%
      Package color not loaded in conjunction with
      terminal option `colourtext'%
    }{See the gnuplot documentation for explanation.%
    }{Either use 'blacktext' in gnuplot or load the package
      color.sty in LaTeX.}%
    \renewcommand\color[2][]{}%
  }%
  \providecommand\includegraphics[2][]{%
    \GenericError{(gnuplot) \space\space\space\@spaces}{%
      Package graphicx or graphics not loaded%
    }{See the gnuplot documentation for explanation.%
    }{The gnuplot epslatex terminal needs graphicx.sty or graphics.sty.}%
    \renewcommand\includegraphics[2][]{}%
  }%
  \providecommand\rotatebox[2]{#2}%
  \@ifundefined{ifGPcolor}{%
    \newif\ifGPcolor
    \GPcolorfalse
  }{}%
  \@ifundefined{ifGPblacktext}{%
    \newif\ifGPblacktext
    \GPblacktexttrue
  }{}%
  \let\gplgaddtomacro\g@addto@macro
  \gdef\gplbacktext{}%
  \gdef\gplfronttext{}%
  \makeatother
  \ifGPblacktext
    \def\colorrgb#1{}%
    \def\colorgray#1{}%
  \else
    \ifGPcolor
      \def\colorrgb#1{\color[rgb]{#1}}%
      \def\colorgray#1{\color[gray]{#1}}%
      \expandafter\def\csname LTw\endcsname{\color{white}}%
      \expandafter\def\csname LTb\endcsname{\color{black}}%
      \expandafter\def\csname LTa\endcsname{\color{black}}%
      \expandafter\def\csname LT0\endcsname{\color[rgb]{1,0,0}}%
      \expandafter\def\csname LT1\endcsname{\color[rgb]{0,1,0}}%
      \expandafter\def\csname LT2\endcsname{\color[rgb]{0,0,1}}%
      \expandafter\def\csname LT3\endcsname{\color[rgb]{1,0,1}}%
      \expandafter\def\csname LT4\endcsname{\color[rgb]{0,1,1}}%
      \expandafter\def\csname LT5\endcsname{\color[rgb]{1,1,0}}%
      \expandafter\def\csname LT6\endcsname{\color[rgb]{0,0,0}}%
      \expandafter\def\csname LT7\endcsname{\color[rgb]{1,0.3,0}}%
      \expandafter\def\csname LT8\endcsname{\color[rgb]{0.5,0.5,0.5}}%
    \else
      \def\colorrgb#1{\color{black}}%
      \def\colorgray#1{\color[gray]{#1}}%
      \expandafter\def\csname LTw\endcsname{\color{white}}%
      \expandafter\def\csname LTb\endcsname{\color{black}}%
      \expandafter\def\csname LTa\endcsname{\color{black}}%
      \expandafter\def\csname LT0\endcsname{\color{black}}%
      \expandafter\def\csname LT1\endcsname{\color{black}}%
      \expandafter\def\csname LT2\endcsname{\color{black}}%
      \expandafter\def\csname LT3\endcsname{\color{black}}%
      \expandafter\def\csname LT4\endcsname{\color{black}}%
      \expandafter\def\csname LT5\endcsname{\color{black}}%
      \expandafter\def\csname LT6\endcsname{\color{black}}%
      \expandafter\def\csname LT7\endcsname{\color{black}}%
      \expandafter\def\csname LT8\endcsname{\color{black}}%
    \fi
  \fi
  \setlength{\unitlength}{0.0500bp}%
  \begin{picture}(3600.00,3528.00)%
    \gplgaddtomacro\gplbacktext{%
    }%
    \gplgaddtomacro\gplfronttext{%
      \csname LTb\endcsname%
      \put(507,1142){\makebox(0,0){\strut{} 0}}%
      \put(824,959){\makebox(0,0){\strut{} 4}}%
      \put(1141,817){\makebox(0,0){\strut{} 8}}%
      \put(1459,674){\makebox(0,0){\strut{} 12}}%
      \put(908,779){\makebox(0,0){\strut{}$t$}}%
      \put(1893,615){\makebox(0,0){\strut{} 0}}%
      \put(3083,1150){\makebox(0,0){\strut{} 1}}%
      \put(2692,819){\makebox(0,0){\strut{}$x$}}%
      \put(484,1656){\makebox(0,0)[r]{\strut{} 0}}%
      \put(484,1874){\makebox(0,0)[r]{\strut{} 4}}%
      \put(484,2092){\makebox(0,0)[r]{\strut{} 8}}%
      \put(484,2310){\makebox(0,0)[r]{\strut{} 12}}%
      \put(484,2528){\makebox(0,0)[r]{\strut{} 16}}%
    }%
    \gplbacktext
    \put(0,0){\includegraphics{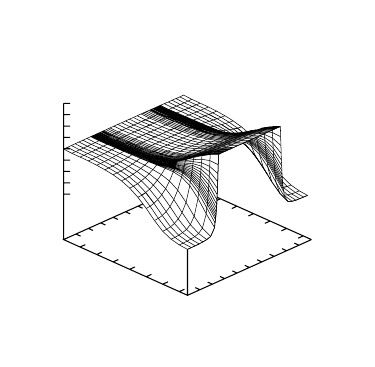}}%
    \gplfronttext
  \end{picture}%
\endgroup
}
\end{tabular}
\caption{Numerical simulations of solutions to 
the nonlocal  system  \eqref{eqsc1}-\eqref{eqsc2} supplemented with different initial conditions, see Remark \ref{rem:fig} for more details.}
\label{figure}
\end{figure}

}



\begin{rem}[Numerical simulations of the model]\label{rem:fig}
Our mathematical results on a growth of unbounded patterns 
 are motivated by numerical simulations of solutions to 
the nonlocal  system  \eqref{eqsc1}-\eqref{inisc} which we present in 
Fig.~\ref{figure}.
Each graph of  Fig.~\ref{figure} 
shows   the function $u=u(x,t)$  on the interval $\Omega=[0,1]$
which is a solution (togeher with a suitable function $\xi=\xi(t)$)
to system \eqref{eqsc1}-\eqref{eqsc2}
 with the parameters  $a=2$, $d=1$, $\kappa_0 =65/8$. 
Note that, in this case, 
 the constant vector $(\u,\bxi)=(8,1/8)$ is an asymptotically
 stable solution of the corresponding ODE system \eqref{eq-kin}
({\it cf.} \cite[Theorem B.2]{MKS16} and Remark~\ref{rem:DDI:carc}).
Thus, each figure shows a destabilization 
of  the spatially homogeneous steady state $(\u,\bxi)=(8,1/8)$
of the nonlocal system  \eqref{eqsc1}-\eqref{inisc} due to the Turing instability.
In particular,  in Fig.~\ref{figure} we can observe the following unbounded patterns:
\begin{itemize}
\item
{\it Single spike pattern.} For   the initial data
 $u_0(x) = 8- 0.05  (\cos(2 \pi x)+ 0.25 (1-x))$ 
(with one global maximum)
and $\xi_0 = 1/8$, we observe a formation of an unbounded  single spike at  the point of the global 
maximum  
of the initial function $u_0(x)$.

\item
 {\it Spike competition.} For   the initial data
   $u_0(x) = 8+ 0.05  \sin(3 \pi x) (1+0.1 x)$ 
(with two local maxima of different values)
and $\xi_0 = 1/8$,
we observe that initially, after the destabilization of the spatially homogeneous steady state $(\u,\bxi)=(8,1/8)$, a pattern with two peaks develops. 
The spikes are located at 
two points of 
 the local maxima of the initial function $u_0$. However,
when time is elapsing,
the spike corresponding to a larger initial value persists and growths to $+\infty$, 
while the other one decays.

\item
 {\it Double spike pattern.} For the 
the initial data 
 $u_0(x) = 8+ 0.05 \sin(3 \pi x) $ 
(with two local maxima of the same value)
and $\xi_0 = 1/8$,
 we observe Turing instability of the spatially homogeneous steady state  $(\u,\bxi)=(8,1/8)$ 
leading to formation of two spikes at two points of 
 the local maxima of the initial function $u_0$.

 \item
 {\it  Plateau pattern.}  For the  initial data 
  $ u_0(x) = 8- 0.05 \sin(2 \pi x + 0.5 \pi)$ for $x \notin (0.25,0.75)$ and 
$u_0(x) = 8$ for $x \in [0.25,0.75]$
(a maximal value achieved at an interval) 
 and $\xi_0 = 1/8$,  we observe that
 the Turing instability of the spatially homogeneous steady state  $(\u,\bxi)=(8,1/8)$ 
 leads to destabilization of this constant equilibrium and the solution converges to a 
 bounded, discontinuous pattern,  see Remark \ref{rem:plateau} below for more explanation.

\end{itemize}

\end{rem}

Mathematical results reported in this paper combined with  
 numerical investigations of model \eqref{eqsc1}-\eqref{inisc}
  led us to a conjecture that the model solutions tend asymptotically 
  to  unbounded patterns supported on  sets of measure zero
 (which resemble a sum of weighted Dirac measures).
  This is a new pattern formation phenomenon in the systems of reaction-diffusion-type which has not been studied analytically so far.

Additionally, we extend some results to nonlocal equations with general nonlinearities:
\begin{align}
u_t  &=   f(u,\xi),&     \text{for}\quad &x\in{\Omega}, \; t>0, && \label{eqs1}\\
\xi_t  &=   \int_\Omega g\big(u(x,t),\xi(t)\big)\;dx&  \text{for}\quad & t>0,&&\label{eqs2}
\end{align}
with arbitrary $C^1$-functions $f=f(u,\xi)$ and $g=g(u,\xi)$ and 
supplemented with suitable  initial conditions. Since the obtained results are only a slight modification of those shown recently for reaction-diffusion-ODE models, we place them in Appendix.

In Appendix \ref{sec:instab}, we show that, although model \eqref{eqs1}-\eqref{eqs2} is not a reaction-diffusion system, it may exhibit a pattern formation phenomenon based on the same principle as the classical Turing mechanism, {\it i.e.}~there exists a spatially constant steady state, which is stable to spatially homogeneous perturbations, but unstable to spatially heterogeneous perturbations. 
However, we prove much more than it us usually shown in the case of studies of DDI
phenomena in reaction-diffusion equations.
First,we characterize stationary solutions  of problem
\eqref{eqs1}-\eqref{eqs2}. Then, we show that they are unstable (more precisely, nonlinearly unstable in the Lyapunov sense) under so called {\it autocatalysis assumption}, {\it i.e.}~when $f_u>0$, which is a condition typical for models exhibiting Turing instability (see Theorem \ref{thm:instab} below for more details). 
This indicates that, in the considered class of models, {\it the Turing-type mechanism destabilizes not only constant steady states but also  non-constant stationary solutions.} A similar mechanism, where DDI destabilizes all non-constant stationary solutions of reaction-diffusion-ODE 
of the form 
\eqref{RD}, has been recently studied in our papers \cite{MKS12,MKS16}.

%

%

 Moreover, in Appendix \ref{app:blowup} we provide another particular example of system \eqref{eqs1}--\eqref{eqs2}, which shows  that the Turing phenomenon may lead not only to formation of unbounded patterns as those shown in Fig.~\ref{figure} but also to a blow up of solutions in a finite time.
Analogous results for reaction-diffusion-ODE systems \eqref{RD} with particular nonlinearities were published in Refs.~\cite{MCKSZ16, KSZ16}.

%


\section{Model of early carcinogenesis}\label{sec:carc}

\subsection{Preliminary properties of solutions}

For completeness of the paper, we provide basic properties of the nonlocal model  \eqref{eqsc1}-\eqref{inisc}.

\begin{prop}\label{thm:existence:carc}
Assume that   
 $u_0\in C (\overline\Omega)$ is nonnegative and  $\xi_0> 0$.
Then the initial value problem 
\eqref{eqsc1}-\eqref{inisc} has a unique, global-in-time, nonnegative solution 
$u \in C([0, \infty)), C (\overline\Omega))$, $\xi \in C^1([0, \infty))$.
This solution satisfies equation \eqref{eqsc1} in a classical sense, 
because  
$u(x,\cdot) \in C^1([0, \infty))$ for every $x\in \Omega$.
The following pointwise estimates  hold true:
\begin{equation}\label{est1:carc}
0\leq u(x,t)\leq e^{(a-d)t}u_0(x) \qquad \text{and}\qquad
0< \xi(t)\leq \max\left\{ \xi_0, {\kappa_0}\right\}
\end{equation}
for all $x\in \Omega$ and $t\geq 0$. Moreover, 
the ``total mass'' of $u(x,t)$ is bounded: 
\begin{equation}\label{mass:bound}
\sup_{t>0}\int_\Omega u(x,t)\,dx<\infty.
\end{equation}
\end{prop}

\begin{proof}
A construction of a unique local-in-time continuous solution 
to problem \eqref{eqsc1}-\eqref{inisc} on the set $\overline{\Omega} \times [0, T]$ with certain $T>0$ is more-or-less standard and 
we recall it  in the beginning of 
Appendix \ref{sec:instab}. 

This solution is nonnegative in case of nonnegative initial conditions, which can be proved in the following  way.  Let $T>0$ be arbitrary.
First, we notice that since $\sup_{x \in \overline{\Omega},\, t \in [0, T] }|u(x, t)| < \infty$, we have got
$
\sup_{0 \le t < T} \int_\Omega u^2 (x, t)\, dx < \infty. 
$
Suppose that there exists $T_1 \in (0, T]$ such that $\xi (T_1) = 0$
and $\xi(t)>0$ for $t\in [0, T_1]$.
It is easy to see by equation \eqref{eqsc2} that $\xi_t (T_1) = \kappa_0 > 0$ which implies immediately that $\xi (t)$ cannot 
decrease in a neighborhood of $T_1$.
 Hence, we obtain that $\xi (t) > 0$ for all $t \in [0, T]$. 
On the other hand, given an arbitrary $\xi(t)$, equation \eqref{eqsc1} is an ordinary differential equation for $u(x,\cdot)$ for each $x\in \overline\Omega$. This equation has a trivial solution $u\equiv 0$ for each $\xi(t)$. Hence, by a standard argument for ordinary differential equations involving the uniqueness of solution, we obtain that 
if for some $x\in \overline\Omega$ we have $u(x,0)=0$, then $u(x,t)=0$ for all $t\in [0,T]$ and the inequality $u(x,0)>0$ implies $u(x,t)>0$ for all $t\in [0,T]$.

%

Nonnegative local-in-time solutions  can be continued 
 global-in-time by a standard continuation argument
 provided we prove estimates \eqref{est1:carc} which may be obtained in the following way.
Applying to  equation \eqref{eqsc1}
 the inequality  $u\xi/(1+u\xi)\leq 1$ (valid for a nonnegative 
solution $(u,\xi)$)
we obtain the differential inequality $u_t\leq (a-d)u$ which implies first estimate in \eqref{est1:carc}.
The second one in \eqref{est1:carc} is a direct consequence of the inequality $\xi_t\leq-\xi+\kappa_0$ resulting from equation \eqref{eqsc2} for nonnegative $\xi$.

To show property \eqref{mass:bound}, we 
use  a differential inequality $u_t\leq a u^2\xi-du$ 
obtained  from equation \eqref{eqsc1} with
$u\xi\geq 0$. Integrating this inequality over $\Omega$ and using the equation for $\xi$ in \eqref{eqsc2}, we have got the estimate
\begin{equation}\label{diff:mass}
\begin{split}
\frac{d}{dt}\left(\int_\Omega u\,dx +a\xi\right) &\le -d\int_\Omega u\,dx -a\xi+a\kappa_0\\
&\leq -\min\{1,d\}\left(\int_\Omega u\,dx +a\xi\right)+a\kappa_0,
\end{split}
\end{equation}
which implies that the quantity $\int_\Omega u(t)\,dx +a\xi(t)$ is bounded for 
$t\in (0,\infty)$, because the 
constants $a$ and $d$ are positive. Since $\xi(t)>0$, we immediately obtain 
\eqref{mass:bound}.

Details of an analogous  proof in the case of a reaction-diffusion-ODE system corresponding to \eqref{eqsc1}-\eqref{eqsc2} can be found in \cite[Sec.~3]{MKS12}.
\end{proof}

Next, we discuss space homogeneous solutions of problem \eqref{eqsc1}-\eqref{inisc}.

\begin{prop}\label{thm:kinetic}
If $u_0(x)\equiv \bar u_0$ is independent of $x$,
then the corresponding solution of 
\eqref{eqsc1}-\eqref{inisc} 
is independent of $x$ as well. Thus, for $|\Omega|=1$,
the function
$u(x,t)=u(t)$ and $\xi=\xi(t)$
satisfy the following  system of ordinary differential equations
\begin{equation}\label{eq-kin}
\frac{d}{dt} u=\Big(\frac{a u\xi}{1+u\xi} -d\Big) u,\qquad 
\frac{d}{dt} \xi =  - \xi - \xi  u^2   +\kappa_0,
\end{equation}
which after supplementing with  initial data $\u_0>0$ and $\xi_0>0$,
has a unique global-in-time positive solution $(\u(t),\xi(t))$.
This solution is bounded for $t>0$.
\end{prop}
\begin{proof}
A solution of problem \eqref{eqsc1}-\eqref{inisc}  with constant $u_0(x)=\u_0$
does not depend on $x$ which is an immediate consequence 
of the uniqueness of solutions established in Proposition \ref{thm:existence:carc}.
From now, the study of the system of ordinary differential equations 
\eqref{eq-kin} is completely standard. In particular, we 
have  the
 differential inequality $du/dt \leq a u^2\xi-du$ for nonnegative solutions  
 which together with the second equation in \eqref{eq-kin} yields the estimate
$$
\frac{d}{dt}(u+a\xi) \le -du-a\xi+a\kappa_0
\leq -\min\{1,d\}(u+a\xi)+a\kappa_0.
$$
Thus, 
 the sum $u+a\xi$ 
(and so each of its term)
is bounded on $[0,\infty)$.
\end{proof}

\begin{rem}[Constant stationary solutions]\label{rem:DDI:carc}
System of ODEs \eqref{eq-kin} has a trivial steady state 
$(u,\xi)=(0,\kappa_0)$ which is its asymptotically stable solution
(see  also Remark \ref{rem:trivial}, below).
Detailed analysis of {\it positive} steady states of system \eqref{eq-kin}
can be found in our recent work \cite[Appendix B]{MKS16}. It is shown that for $a>d$ and $\kappa_0^2>4(d/(a-d))^2$, there exist two positive steady states  of system \eqref{eq-kin}.
One of these solutions is always unstable. 
Conditions 
on the coefficients in equations \eqref{eq-kin} under which the second constant solution is an asymptotically stable solution of the system
of ordinary differential equations \eqref{eq-kin} can be found in \cite[Appendix B]{MKS16}.
\end{rem}

\begin{rem}[Turing instability of constant solutions]
By Theorem \ref{cor:gs2} below, both constant steady states 
discussed in Remark \ref{rem:DDI:carc}
are unstable solutions to the nonlocal problem  
\eqref{eqsc1}-\eqref{inisc}.
In particular, we obtain that this problem describes 
  the Turing instability 
of this constant steady state which is stable as a solution
to the kinetic ODE system \eqref{eq-kin}.
\end{rem}


\subsection{Instability of spatially heterogeneous nonnegative stationary solutions}\label{sec:stat}
\ 
Now, we study non-constant {\it nonnegative} stationary solutions of system \eqref{eqsc1}-\eqref{eqsc2}, 
namely, we look for a function $U\in L^\infty(\Omega)$ and a constant $\bxi\in\R$ satisfying 
\begin{align}
\Big(\frac{a U\bxi}{1+U\bxi} -d\Big) U&=0 &&\text{for}\ x\in \Omega, \label{seqsc1}\\
 - \bxi - \bxi \int_\Omega U^2 \;dx  +\kappa_0&=0. && 
 \label{seqsc2}
\end{align}
For this purpose,
we decompose the set $\Omega$ into an arbitrary disjoint sum of two measurable sets 
$$
\Omega = \Omega_1\cup\Omega_2, \qquad \text{where}\quad \Omega_1\cap\Omega_2=\emptyset \quad\text{and}\quad |\Omega_1|>0,
$$
and solving  equation \eqref{seqsc1} with respect to $U$ we define
\begin{equation}\label{Ugs}
U(x)=
\begin{cases}
d/\left((a-d)\bxi \right)&\text{if}\quad  x\in \Omega_1,\\
0&\text{if}\quad  x\in \Omega_2.
\end{cases}
\end{equation}
Then, for $a>d$, one calculates $\bxi$ from equation \eqref{seqsc2}
which for $U$ defined by formula \eqref{Ugs} reduces to the quadratic equation
\begin{equation}\label{xigs}
\bxi^2 -\kappa_0 \bxi + \dfrac{d^2}{(a-d)^2} |\Omega_1|=0
\end{equation}
with two positive roots,  provided  $\kappa_0^2>4 \left(d/(a-d)\right)^2|\Omega_1|$.

Now, we prove  that all such stationary  solutions are unstable.

\begin{theorem}[Instability of all stationary solutions]\label{cor:gs2}
Nonnegative  stationary solution $\big(U,\bxi\big)$  of system \eqref{eqsc1}-\eqref{eqsc2} exist  under the assumptions $a>d$ and 
$\kappa_0^2>4 \left(d/(a-d)\right)^2|\Omega_1|$. They are 
 given by formula \eqref{Ugs} with $|\Omega_1|>0$ and $\bxi$ satisfying equation \eqref{xigs}. 
 
The couple $\big(U,\bxi\big)$ is an unstable solution of the initial value problem for the nonlocal  
system \eqref{eqsc1}-\eqref{eqsc2}.
\end{theorem}

\begin{proof}
The construction of nonnegative stationary solutions is given above. To show their instability, 
we apply Theorem \ref{thm:instab} from Appendix \ref{sec:instab}.
Here,  
the autocatalysis assumption \eqref{auto:constant} holds true 
because for $U(x) \bxi = d/(a-d)$ with $x\in \Omega_1$
  we have 
$$
f_u \big(U(x), \bxi\big) =
\frac{aU(x) \bxi}{1 + U(x)\bxi} -d + \dfrac{aU(x)\bxi}{\left(1 + U(x)\bxi\right)^2 }=
\dfrac{d(a-d)}{a} >0 \qquad \text{for all} \quad x\in \Omega_1.
$$
\end{proof}

Further discussion of stationary solutions of nonlocal systems with general 
nonlinearities is contained in Appendix \ref{sec:instab}.

\begin{rem}\label{rem:trivial}
On the other hand, one can prove that 
 the ``trivial'' stationary solution 
$\big(U(x),\bxi\big)=(0,\kappa_0)$ 
 is an asymptotically stable stationary solution  of system \eqref{eqsc1}-\eqref{eqsc2}.
Indeed, it follows from equation \eqref{eqsc1} that the nonnegative function $u(x,t)$ satisfies 
$u_t\leq au^2K-du$, where the constant $K=\sup_{t\geq 0} \xi(t)$ is finite by
second inequality in  \eqref{est1:carc}.
Hence, if $u_0\in L^\infty(\Omega)$ is nonnegative and sufficiently small 
then  $u(x,t)\to 0$ as $t\to\infty$ uniformly in $x \in \overline{\Omega}$. 
This decay of $u$ implies  $\int_\Omega u^2(x,t)\,dx\to 0$ as $t\to\infty$.
Hence, using equation \eqref{eqsc2}  we can easily show that $\xi(t)\to \kappa_0$ as $t\to\infty$.
See {\it e.g.} \cite[Theorem 2.2]{MKS12})  for an analogous   reasoning in the case of the corresponding 
reaction-diffusion-ODE system.
\end{rem}

\begin{rem}\label{rem:plateau}
Notice that if for some $x_1\neq x_2$ we have 
$u_0(x_1)=u_0( x_2) $, then
$u(x_1,t)=u(x_2,t)$ for all $t\geq 0$, because both quantities 
$u(x_1,t)$ and $u(x_2,t)$ as functions of $t$
satisfy equation \eqref{eqsc1} with the same function $\xi=\xi(t)$ and the same initial conditions. 
Consequently, if the measure of $\Omega_*\equiv \{x\in \Omega\;:\; u_0(x)=
\max_{x\in \overline\Omega} u_0(x)\}
$ is positive, 
the function $u(x,t)$ cannot escape to $+\infty$ for any $x\in \Omega$ due to the boundedness of the mass \eqref{mass:bound}.
In such a case,  
one can show (by using  Lemma \ref{lem:decay} below) that 
\begin{equation*}
u(x,t) \to
U(x)= 
\left\{
\begin{array}{ccl}
\u &\text{if}& x\in\Omega_*,\\
0&\text{if}&x\in\Omega\setminus \Omega_*
\end{array}
\right.
\qquad 
 \text{and}\qquad 
\xi(t)\to\bxi
\end{equation*}
as  $t\to\infty$, where $\big(U(x),\bxi\big)$ is a discussed-above 
stationary solution of system
 \eqref{eqsc1}-\eqref{eqsc2}.
Thus, there are bounded 
(and also continuous) initial conditions
such that the corresponding  solutions converge pointwise for every $x\in\Omega $
 towards discontinuous stationary solutions.
 Obviously,
such convergence result
 does not contradict  the instability 
of a steady state $(U,\bxi)$
proved in Theorem \ref{cor:gs2}.
We refer the reader to 
 Fig.~\ref{figure} (the graph in the second row and the second column)
  for a numerical illustration of such a phenomena.
\end{rem}


\section{Formation of unbounded spikes}
Now, we are in a position to prove our main result on unbounded 
growth of solutions to system \eqref{eqsc1}--\eqref{eqsc2}.

\begin{theorem}\label{thm:growth}
Let $\Omega\subset \R^n$ be a bounded and closed set with non-empty interior and satisfying 
$|\Omega|=1$.
Let $(u,\xi)$ be a nonnegative solution of problem \eqref{eqsc1}--\eqref{inisc},
where positive constants  in those equations 
satisfy the following inequalities
\begin{itemize}
\item the parameter $a$ is large:
\begin{align}
2(a-d)\geq 1, \label{as:a} 
\end{align}
\item the constant $\kappa_0$ is large:
\begin{align}
\kappa_0 \ge 4 a.
 \label{as:k0}
\end{align}
\end{itemize}
Let $\lambda$ satisfy
\begin{align}
\dfrac{1}{2} \le \lambda \le 1-\dfrac{2a}{\kappa_0}. \label{k-eq1}
\end{align}
Assume that 
 nonnegative initial conditions 
$ u_0 \in C (\Omega)\cap L^\infty(\Omega)$ and $\xi_0 \in \R$
 satisfy
\begin{equation}\label{la00}
   \xi_0 \int_\Omega u_0^2 (x)\, dx > \lambda \kappa_0
 \qquad
 \text{and}\qquad 
 0<\xi_0 \leq  (1-\lambda){\kappa_0}.
\end{equation}
Suppose  that the set 
$$
\Omega_{*}=\{x_*\in\Omega \,:\, u_0 (x_\ast) =
 \max_{x \in \Omega}u_0(x)\}
$$
has measure zero.
Then
\begin{align}
&\sup_{t>0}u(x_*,t)  =+\infty && \text{for each} \;\; x_*\in\Omega_*,\label{sup:u}\\
&\sup_{t>0}u(x,t) =\infty  && \text{for each}
 \;\; x\in\Omega\setminus \Omega_*,\\
&\inf_{t>0}\xi(t)  =0. &&\label{inf:xi}
\end{align}
\end{theorem}

%
%
%

We  show on Fig. \ref{figure} numerical simulations of solutions to 
the nonlocal  system  \eqref{eqsc1}-\eqref{eqsc2},  illustrating in this way 
the analytical results presented in Theorem~\ref{thm:growth}.

We proceed the proof  of Theorem \ref{thm:growth} by auxiliary results.


\begin{lemma}\label{lem:0}
Under the assumptions of Theorem  \ref{thm:growth},
the  solution $\big(u(x,t),\xi(t)\big)$ 
of problem \eqref{eqsc1}--\eqref{inisc} satisfies 
\begin{equation}\label{la0}
\xi(t) \int_\Omega u^2 (x, t)\, dx \ge \lambda \kappa_0 \qquad
 \text{and}\qquad 0 < \xi (t) \le (1-\lambda) \kappa_0
\end{equation}
for all $t \ge 0$. 
%

\end{lemma}
\begin{proof}
By the first inequality in \eqref{la00} and 
by the continuity 
of the solution $(u,\xi)$ 
({\it cf.}~Proposition \ref{thm:existence:carc}), there exists $T_1 >
 0$ such that 
\begin{equation}\label{la1}
 \int_\Omega u^2 (x, t)\, dx - \lambda \dfrac{\kappa_0}{\xi(t)} > 0 \qquad
 \text{for}\quad 0 \leq  t < T_1.
\end{equation}
Suppose that 
\begin{equation}\label{T1:0}
 \int_\Omega u^2 (x, T_1)\, dx - \lambda \dfrac{\kappa_0}{\xi(T_1)} = 0.
\end{equation}

First, notice that using \eqref{la1} in equation \eqref{eqsc2}
we obtain the differential inequality 
 $\xi_t \le -\xi + (1-\lambda)\kappa_0$. 
This implies $\xi (t) \le (1-\lambda)\kappa_0$ for $0 \le t \le T_1$
 due to the assumption \eqref{la00}. Thus, we obtain from
  \eqref{la1} and  \eqref{k-eq1}
the following estimate  
\begin{equation}\label{la2}
 \int_\Omega u^2 (x, t)\, dx \geq \lambda \dfrac{\kappa_0}{\xi (t)} \ge
  \dfrac{\lambda}{1-\lambda} \ge 1
\qquad
 \text{for}\quad 0 \leq  t \le T_1.
\end{equation}

Next, multiplying equation \eqref{eqsc1} by $u$ and integrating it over $\Omega$ results in the
equation
\begin{equation}\label{L2}
\frac12 \frac{d}{dt} \int_\Omega u^2(x,t)\;dx= 
 \int_\Omega \left(\frac{a u(x,t)\xi(t)}{1+u(x,t)\xi(t)} -d\right) u^2(x,t)\;dx.
\end{equation}
Hence, 
by a direct calculation involving  equations \eqref{L2} and \eqref{eqsc2}, we
 obtain the identity
\begin{equation}\label{L2bis}
\begin{split}
\dfrac{d}{dt}\left[\int_\Omega u^2 (x, t)\, dx - \lambda \dfrac{\kappa_0}{\xi
 (t)} \right] & = 2(a-d)\int_\Omega u^2 (x, t)\, dx - 2a 
 \int_\Omega \dfrac{u^2 (x, t)}{1 + u(x, t)\xi(t)}\, dx \\
&\quad - \lambda \dfrac{\kappa_0}{\xi (t)} - 
 \lambda \dfrac{\kappa_0}{\xi (t)} \int_\Omega u^2 (x, t)\, dx +
  \lambda \left(\dfrac{\kappa_0}{\xi (t)}\right)^2 .
\end{split}
\end{equation}
Here, using 
a minor rearrangement of terms on the right-hand side and 
the following simple inequalities
(which are valid because the solution is nonnegative and because $|\Omega|=1$)
$$
 \int_\Omega \dfrac{u^2 (x, t)}{1 + u(x, t)\xi(t)}\, dx
\leq \frac{1}{\xi(t)} \int_\Omega u (x, t)\, dx
\leq \frac{1}{\xi(t)} \left(\int_\Omega u^2 (x, t)\, dx\right)^{1/2},
$$
we obtain the lower bound
\begin{align*}
\dfrac{d}{dt}\left[\int_\Omega u^2 (x, t)\, dx -  \lambda \dfrac{\kappa_0}{\xi
 (t)} \right] 
\ge &- \left(\dfrac{\kappa_0}{\xi (t)} -1 \right) \left[\int_\Omega
 u^2 (x, t)\, dx -  \lambda \dfrac{\kappa_0}{\xi (t)}\right] \\
& + \big(2(a-d) -1 \big)
\int_\Omega u^2 (x, t)\, dx \\
&+ (1-\lambda) \dfrac{ \kappa_0}{\xi(t)}\int_\Omega u^2
 (x, t)\, dx  -  \dfrac{2a}{\xi (t)}\left(\int_\Omega u^2 (x, t)\,
 dx \right)^{1/2}.
\end{align*}
In this inequality, 
the second term on the right-hand side is nonnegative by  the assumption \eqref{as:a}. So is the difference of last two terms on the right-hand side,
because $\int_\Omega u^2 (x, t)\, dx\geq 1$ ({\it cf.} \eqref{la2})
and because 
$
(1-\lambda) \kappa_0 z-2az^{1/2}\geq 0
$
for all $z\geq 1$ if $\lambda \le 1-{2a}/{\kappa_0}$.

Therefore, for $0\leq t\leq T_1$, we have the differential inequality 
\[
 \dfrac{d}{dt}\left[\int_\Omega u^2 (x, t)\, dx - \lambda \dfrac{\kappa_0}{\xi
 (t)} \right] \ge - \left(\dfrac{\kappa_0}{\xi (t)} -1 \right) \left[\int_\Omega
 u^2 (x, t)\, dx - \lambda \dfrac{\kappa_0}{\xi (t)}\right],
\]
which for  $\xi (t)>0$ leads to  the estimate
\[
 \int_\Omega u^2 (x, t)\, dx - \lambda \dfrac{\kappa_0}{\xi (t)} \ge
 e^{-\left[\frac{\kappa_0}{\min_{0 \le t \le T_1} \xi (t)} - 1
 \right]t} \left[\int_\Omega u_0^2 (x)\, dx -
 \lambda \dfrac{\kappa_0}{\xi_0}\right] > 0.
\]
This inequality 
 for $t = T_1$ contradicts identity \eqref{T1:0}. %
Hence, we have completed the proof  of inequalities \eqref{la0} for all $t\geq 0$.

\end{proof}

Now, let us emphasize the following immediate consequence of
Lemma \ref{lem:0}. 

\begin{cor}\label{cor:max}
Let the assumptions of Lemma  \ref{lem:0} hold true.
Denote by $x_\ast \in \Omega$ a point of the maximum of $u_0$, namely, 
$
 u_0 (x_\ast) = \max_{x \in \Omega} u_0 (x).
$
Then
\begin{equation}\label{u*int}
 u^2 (x_\ast, t) \ge  \int_\Omega u^2(x,t)\,dx\geq 1
 \qquad \text{for all}\quad
 t > 0.
\end{equation}
\end{cor}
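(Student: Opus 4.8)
The plan is to read off the second inequality in \eqref{u*int} directly from Lemma \ref{lem:0}. Combining the two estimates in \eqref{la0}, I would write, for every $t\ge 0$,
$$
\int_\Omega u^2(x,t)\,dx > \frac{\lambda\kappa_0}{\xi(t)} \ge \frac{\lambda\kappa_0}{(1-\lambda)\kappa_0} = \frac{\lambda}{1-\lambda}\ge 1,
$$
where the middle step uses the upper bound $0<\xi(t)\le(1-\lambda)\kappa_0$ from \eqref{la0} and the last inequality holds because $\lambda\ge 1/2$ by assumption \eqref{k-eq1}. So this half is essentially free.

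For the first inequality, the plan is to prove that the point $x_*$ of the maximum of $u_0$ remains a point of the maximum of $u(\cdot,t)$ for every $t>0$; once this is known, the inequality is immediate. The key observation I would exploit is that, once the solution $(u,\xi)$ is fixed, the function $\xi=\xi(t)$ does not depend on $x$. Hence, for each fixed $x\in\Omega$, the function $t\mapsto u(x,t)$ solves the \emph{scalar} non-autonomous Cauchy problem
$$
\frac{d}{dt}w = F(w,t),\qquad w(0)=u_0(x),\qquad\text{where}\quad F(w,t)=\Big(\frac{a\,w\,\xi(t)}{1+w\,\xi(t)}-d\Big)w,
$$
with \emph{one and the same} vector field $F$ for all $x$. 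Since $\xi(t)>0$ and the relevant solutions are nonnegative and bounded (Proposition \ref{thm:existence:carc}), the right-hand side $F(\cdot,t)$ is locally Lipschitz in $w$, uniformly on compact time intervals.

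Next I would invoke uniqueness for this scalar Cauchy problem: trajectories issued from ordered initial data cannot cross, so $u_0(x)\le u_0(x_*)$ forces $u(x,t)\le u(x_*,t)$ for all $t\ge 0$. As $u_0(x_*)=\max_{x\in\Omega}u_0(x)$, this gives $0\le u(x,t)\le u(x_*,t)$ for all $x\in\Omega$ and $t\ge 0$; squaring and integrating over $\Omega$, and using $|\Omega|=1$, I obtain
$$
\int_\Omega u^2(x,t)\,dx \le u^2(x_*,t)\int_\Omega dx = u^2(x_*,t),
$$
which is the first inequality in \eqref{u*int}. The only delicate point in this argument is the order-preservation step; it rests entirely on the uniqueness of solutions to the scalar equation $\dot w=F(w,t)$, which in turn is guaranteed by the positivity and boundedness of $\xi$ established in Proposition \ref{thm:existence:carc}. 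Everything else is bookkeeping.
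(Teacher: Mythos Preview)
Your proposal is correct and follows essentially the same approach as the paper. The paper's own proof is terser---it simply says the first inequality ``holds true because $|\Omega|=1$'' (implicitly relying on the order-preservation for the family of scalar ODEs that you spell out) and refers back to the derivation of \eqref{la2} for the second inequality---but your explicit justification of the monotonicity-in-initial-data step via uniqueness of the scalar Cauchy problem $\dot w=F(w,t)$ is exactly the mechanism the paper uses throughout (cf.\ Remark~\ref{rem:plateau} and the computation in Lemma~\ref{lem:decay}).
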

\begin{proof}
First inequality in \eqref{u*int} holds true because $|\Omega|=1$. 
The second one results immediately from inequalities \eqref{la0}
in the same way 
as in the proof of \eqref{la2}.
\end{proof}

\begin{lemma}\label{lem:decay}
Let the assumptions of Theorem  \ref{thm:growth} hold true.
Assume that $x_*\in\Omega_*$ and suppose 
that $u_*(t)\equiv u(x_*,t)=\max_{x\in \Omega} u(x,t)$ is a bounded function
of $t\in [0,\infty)$.
Then, for each $x\in\Omega$ such that $u_0(x)<u_0(x_*)$ it holds
$u(x,t)\to 0$ exponentially as $t\to\infty$.
\end{lemma}

\begin{proof}
If $u_*(t)$ is a bounded function, there exists a constant $R_1>0$ such that   $u_*(t)\leq R_1$ for all $t>0$. Thus,  we have
$\int_\Omega u^2(x,t)\,dx\leq R^2_1$ for all $t>0$ (because $|\Omega|=1$).
Hence, using equation \eqref{eqsc2} we obtain the differential inequality
$
\xi_t\geq -(1+R^2_1)\xi+\kappa_0,
$
which implies the lower bound 
\begin{equation}\label{xiR2}
\xi(t)\geq \min\left\{\xi_0,\frac{\kappa_0}{1+R^2_1}\right\}\equiv R_2 \qquad
\text{for all}\quad t\geq 0.
\end{equation}

Now, for simplicity of notation, we denote $u=u(x,t)$ and $u_*=u(x_*,t)$.
Hence, by a direct calculation involving equation \eqref{eqsc1},
we obtain
\begin{equation}\label{uu*}
\frac{\partial}{\partial t}\left(\frac{u}{u_*} \right)=-a \frac{u}{u_*}
\left(
\frac{u_*\xi (1-u/u_*)}{(1+u\xi)(1+u_*\xi)}
\right).
\end{equation}

This differential equation for the function $w=u/u_*$ implies the inequalities 
\begin{equation}\label{w<1}
w(t)=\frac{u(x,t)}{u(x_*,t)}\leq 
w(0)=\frac{u(x,0)}{u(x_*,0)}<1\quad  \text{for all} \quad t\geq 0.
\end{equation}
Moreover, using the estimate $u_*(t)\geq 1$ for all $t\geq 0$ 
({\it cf.} Corollary \ref{cor:max}), inequality \eqref{xiR2},
the bound $\xi(t)\leq \kappa_0$ ({\it cf.} \eqref{la0}),
 and
the estimate 
$$
(1+u\xi)(1+u_*\xi)\leq (1+R_1\kappa_0)^2
$$
we obtain the differential inequality
\begin{equation}\label{uu**}
\frac{\partial}{\partial t}\left(\frac{u}{u_*}\right) \leq
-a \frac{u}{u_*}
\left(
\frac{R_2 (1-u_0(x)/u_0(x_*))}{(1+R_1\kappa_0)^2}
\right)
\end{equation}
which implies the exponential decay in $t$ of $u/u_*$ because $u_0(x)/u_0(x_*)<1$.
However, since we assume that $u_*$ is bounded, we obtain immediately 
the exponential decay of 
 $u=u(x,t)$.
\end{proof}

We are in a position to complete the proof of our first theorem.

\begin{proof}[Proof of Theorem \ref{thm:growth}]

Suppose that $u=u(x,t)$ is bounded on $\Omega\times [0,\infty)$. Thus, by 
Lemma~\ref{lem:decay}, we have got 
$
u(x,t)\to 0
$
as $t\to\infty$
for every $x\in\Omega\setminus\Omega_*$. 
In particular, applying the Lebesgue dominated convergence theorem
we have $\int_\Omega u^2(x,t)\,dx\to 0$ as $t\to\infty$, because 
$|\Omega_*|=0$.
This is, however, in contradiction with the inequality in Corollary \ref{cor:max}. Hence, we conclude that $u_*(t)=\max_{x\in \Omega} u(x,t)$ is unbounded
for $t\in [0,\infty)$.

Assume  {\it a contrario} that $\sup_{t>0} u(x_1,t)=+\infty$ for some
$x_1\notin \Omega_{*}$. By the continuity of the initial datum $u_0$, the set
$\Omega_1\equiv \{x\in \Omega\,:\, u_0(x_1)<u_0(x)<u_0(x_*)\}$ has a positive
 Lebesgue measure.
Moreover,  using  differential equations for $w_1(x,t)=u(x_1,t)/u(x,t)$
and for $w_2(x,t)=u(x,t)/u(x_*,t)$, analogous to that one in \eqref{uu*}, we obtain
(in the same way as in the proof of inequalities \eqref{w<1}) that 
$$
u(x_1,t)<u(x,t)<u(x_*,t)\quad \text{for all $x\in \Omega_1$ and all $t\geq 0$}.
$$
These inequalities lead to a contradiction with  the boundedness of the mass 
\eqref{mass:bound}, because
$$
\sup_{t>0} \int_\Omega u(x,t)\,dx\geq 
\sup_{t>0} \int_{\Omega_1} u(x,t)\,dx\geq
\sup_{t>0} u(x_1,t)|\Omega_1|=+\infty.
$$
Thus, we have proved that 
\begin{equation}\label{u:fin}
\sup_{t>0} u(x,t)<\infty \qquad  \text{for each}\quad  x\in \Omega\setminus \Omega_*.
\end{equation}

Next, suppose   that there is a constant $\xi_1>0$ such that $\xi(t)\geq \xi_1$ for all $t>0$. Since we assume $a>d$ and since we have proved already that $\sup_{t>0} u(x_*,t)=\infty$,
we may find $t_1>0$ and $\delta >0$ such that 
\begin{equation}\label{in:delta}
\frac{au(x_*,t_1)\xi_1}{1+u(x_*,t_1)\xi_1}-d> \delta.
\end{equation} 
By the continuity of $u(x,t)$, inequality \eqref{in:delta} holds true at $t_1$ and 
  in a neighborhood $\mathcal{U}\subset \Omega$ of $x_*$,
such that $|\mathcal{U}|>0$.
 Moreover, using equation \eqref{eqsc1}
we immediately obtain the differential inequality
$$
u_t(x,t) \geq 
\left(\frac{au(x,t)\xi_1}{1+u(x,t)\xi_1}-d\right)u(x,t)
 \quad \text{for all $x\in \mathcal{U}$ and $t>0$},
$$
which 
together with inequality \eqref{in:delta} imply
$$
u_t(x,t) 
\geq \delta u(x,t) \quad \text{for all $x\in \mathcal{U}$ and $t\geq t_1$}.
$$
Hence, we have got the estimate $u(x,t)\geq e^{t\delta} u(x,t_1)$ for all $x\in \mathcal{U}$ and $t\geq t_1$, which contradicts
the boundedness of the mass
\eqref{mass:bound}, because the Lebesgue measure of $\mathcal{U}$ is greater than zero. This contradiction means that necessarily 
$ 
\inf_{t\geq 0} \xi(t)=0.
$ 
%
\end{proof}

\appendix

\section{Derivation of the nonlocal system}\label{ap:B}
In this part of this paper, 
 we show that solutions of the system   
\begin{align}
u_t  &=   f(u,\xi),&     \text{for}\quad &x\in{\Omega}, \; t>0, && \label{eqs1a}\\
\xi_t  &=   \int_\Omega g\big(u(x,t),\xi(t)\big)\;dx&  \text{for}\quad & t>0,&&\label{eqs2a}
\end{align}
with arbitrary $C^1$-functions $f=f(u,\xi)$ and $g=g(u,\xi)$ and 
supplemented with the initial conditions
\begin{eqnarray}\label{inis}
&&u(\cdot,0)  =   u_{0}\in L^\infty(\Omega),\qquad \xi(0)  =  \xi_{0}\in\R,
\end{eqnarray}
 are limits of solutions to reaction-diffusion-ODE systems \eqref{RD}
when $D\to\infty.$
First, however, we recall certain properties of the heat semigroup.

\begin{lemma}\label{lemma:A}
Let $\{e^{tD\Delta}\}_{t\geq 0}$ be the heat semigroup with the Neumann boundary condition
in  a bounded domain $\Omega\subset \R^n$ with a smooth 
boundary and such that $|\Omega|=1$.
\begin{itemize}
\item[i.]
For every constant $C\in \R$, we have $e^{tD\Delta} C=C$ for all $t\geq 0$.

\item[ii.] 
For every $w_0\in  L^1(\Omega)$ there exists a number $ C(\|w_0\|_1) >0$ independent of $D>0$ such that
we have
\begin{equation}\label{lim:A}
 \sup_{t>0} 
\left(t^{n/2}\left\|e^{tD\Delta} \left( w_0-\int_\Omega w_0\,dx\right)\right\|_\infty\right)
\leq C(\|w_0\|_1) D^{-n/2}
\end{equation}
for all $D>0$.
\end{itemize}
\end{lemma}

\begin{proof}
The first part of this lemma is well-known 
 because every constant $C\in\R$ is a solution of the heat equation 
 in a bounded domain 
 with the Neumann boundary conditions.

To show the second part, we recall the following  estimate 
(see {\it e.g.} \cite[p.~25]{R84} and \cite[Prop.~12.5]{A83})
 of the heat semigroup with the Neumann boundary conditions: 	each $1\leq q\leq p\leq \infty$  and  every $z_0\in L^q(\Omega)$ such that
$\int_\Omega z_0\,dx=0$, we have
\begin{equation}\label{heat}
\begin{split}
\|e^{tD\Delta}z_0\|_p&\leq C\left(1+(tD)^{-(n/2)(1/q-1/p)}\right) e^{-\lambda_1 D t} \|z_0\|_q
\end{split}
\end{equation}
for all  $t>0$,
where $\lambda_1>0$ denotes the first nonzero
eigenvalue of  $-\Delta$ in $\Omega$  under the Neumann boundary conditions
and the number $C>0$ is independent of $t$, $D$, and $z_0$.
We use   inequality \eqref{heat} 
 with  $z_0=w_0-\int_\Omega w_0\,dx$  in the following way:
\begin{equation}\label{lem:A}
\left\|e^{tD\Delta} \left( w_0-\int_\Omega w_0\,dx\right)\right\|_\infty \leq 
C\left(1+(tD)^{-n/2}\right) e^{-\lambda_1 D t}
\left\| w_0-\int_\Omega w_0\,dx\right\|_1
\end{equation}
for all   $t>0$ and a constant $C>0$ 
independent of $t$, $D$, and $w_0$.
Since
$$
\sup_{t>0}t^{n/2}\left(1+(tD)^{-n/2}\right) e^{-\lambda_1 D t}
= D^{-n/2} 
\sup_{s>0} s^{n/2} \left(1+s^{-{n/2}}\right) e^{-\lambda_1  s} <\infty,
$$
we obtain immediately estimate \eqref{lim:A}.
%
\end{proof}

\begin{theorem}\label{thm:A}
Let $f$ and $g$ be arbitrary  $C^1$-nonlinearities and 
let $\Omega\subset \R^n$ be a bounded domain with a smooth boundary and such that $|\Omega|=1$.
Fix arbitrary  $u_0,v_0\in L^\infty(\Omega) $ and $T>0$.
Assume that, for each $D>0$,  the couples $(u^D,v^D)$ are solutions of the initial-boundary 
value problem 
\begin{align}
\label{eq:A}&u^D_t=f(u^D,v^D),&& v_t^D=D\Delta v^D +g(u^D,v^D)&&\text{in}\quad \Omega\\
&u^D(x,0)=u_0(x),&&v^D(x,0)=v_0(x)&&\text{in}\quad \Omega\\
\label{ini:A}&&&\partial_n v^D=0&&\text{on}\quad \partial \Omega
\end{align}
on the common time interval $[0,T]$.
Suppose that
\begin{equation}\label{as:A}
 \sup_{D>0} \left(
\sup_{0\leq t\leq T} \|u^D(t)\|_\infty
+
\sup_{0\leq t\leq T} \|v^D(t)\|_\infty
\right) <\infty.
\end{equation}
Then
\begin{equation*}
\lim_{D\to\infty}  \sup_{0\leq t\leq T} t^{n/2}\left(\|u^D(t)-u(t)\|_\infty
+
 \|v^D(t)-\xi(t)\|_\infty\right)=0,
\end{equation*}
where $(u,\xi)$ is a solution of the nonlocal  system  \eqref{eqs1a}-\eqref{eqs2a}
with $\xi_0=\int_\Omega v_0(x)\,dx$.
\end{theorem}

\begin{proof}
A solution of the initial-boundary value problem \eqref{eq:A}-\eqref{ini:A} satisfies the system of the integral equations
\begin{align*}
&u^D(t) = u_0+\int_0^t f\big(u^D(s),v^D(s)\big)\,ds\\
&v^D(t) = e^{tD\Delta} v_0+\int_0^t  e^{(t-s)D\Delta} g\big(u^D(s),v^D(s)\big)\,ds.
\end{align*}
Subtracting from these equations an analogous  integral representation of $(u,\xi)$ (see equations  \eqref{int1}-\eqref{int2} below)
and calculating the $L^\infty$-norm we obtain the inequalities
\begin{eqnarray*}
\|u^D(t)-u(t)\|_\infty &\leq &\int_0^t \big\| f(u^D(s),v^D(s))-f(u(s),\xi(s))\big\|_\infty\,ds\\
\|v^D(t)-\xi(t)\|_\infty &\leq  &\left\|e^{tD\Delta} \left(v_0-\int_\Omega v_0\,dx\right)\right\|_\infty\\
&&+\int_0^t \left\| e^{(t-s)D\Delta} \left(g(u^D(s),v^D(s))-\int_\Omega g(u^D(s),v^D(s))\,dx \right)\right\|_\infty\,ds,\\
&&+ \int_0^t \int_\Omega \big| g(u^D(s),v^D(s))-g(u(s),\xi(s))\big|\,dx\,ds.
\end{eqnarray*}
Here, we have applied  part i.~of Lemma \ref{lemma:A}. 

Using the Taylor expansion  and 
assumption \eqref{as:A}, we find a constant $C$ independent of $D$ and of $s$
such that  
$$
 \big\| f\big(u^D(s),v^D(s)\big)-f(u(s),\xi(s))\big\|_\infty
\leq C\left( \|u^D(s)-u(s)\|_\infty +\|v^D(s) - \xi(s)\|_\infty\right)
$$
and 
$$
 \big\| g\big(u^D(s),v^D(s)\big)-g(u(s),\xi(s))\big\|_\infty
\leq C\left( \|u^D(s)-u(s)\|_\infty +\|v^D(s) - \xi(s)\|_\infty\right).
$$
Consequently,  using  all these inequalities
 together  with  the Gronwall lemma and multiplying by $t^{n/2}$, we obtain
\begin{equation}\label{last:A}
\begin{split}
t^{n/2} \Big( \|&u^D(t)-u(t)\|_\infty+\|v^D(t)-\xi(t)\|_\infty\Big)\leq 
\Bigg(t^{n/2}\left\|e^{tD\Delta} \left(v_0-\int_\Omega v_0\,dx\right)\right\|_\infty\\
&+t^{n/2}\int_0^t \left\| e^{(t-s)D\Delta} \left(g(u^D(s),v^D(s))-\int_\Omega g(u^D(s),v^D(s))\,dx \right)\right\|_\infty\,ds
\Bigg) e^{CT}.
\end{split}
\end{equation}

The first term on the right-hand side of inequality \eqref{last:A} tends to zero uniformly in $t\geq 0$
as $D\to\infty$ due to Lemma \ref{lemma:A}.ii.
To deal with the second term, we apply the heat semigroup estimate  \eqref{heat} with $p=\infty$ and with  fixed $q>n/2$
to obtain 
\begin{equation*}
\begin{split}
t^{n/2}\int_0^t &\left\| e^{(t-s)D\Delta} \left(g(u^D(s),v^D(s))-\int_\Omega g(u^D(s),v^D(s))\,dx \right)\right\|_\infty\,ds\\
&\leq t^{n/2}\int_0^t \big(1+(t-s)D)^{-(n/2)(1/q)}\big)e^{-(t-s)D\lambda_1}
R(D,q, s)\,ds,
\end{split}
\end{equation*}
where
$$
R(D,q, s)\equiv 
\left\| \left(g(u^D(s),v^D(s))-\int_\Omega g(u^D(s),v^D(s))\,dx \right)\right\|_q.
$$
Here, $\sup_{D>0, \,0\leq s\leq T} R(D,q,s)<\infty$ 
for each $q\in [1,\infty]$ 
by
assumption \eqref{as:A}. 
Moreover, since $n/(2q)<1$, we can easily find a finite number
 $C=C(T,q)>0$  by a change of variables such that 
$$
\sup_{0\leq t\leq T} t^{n/2} \int_0^t \big(1+(t-s)D)^{-(n/2)(1/q)}\big)e^{-(t-s)D\lambda_1}\,ds \leq C D^{-1}.
$$
Thus, computing ``$\sup_{0\leq t\leq T}$'' on both sides of inequality 
\eqref{last:A} we complete the proof of Theorem \ref{thm:A}. 
\end{proof}

\begin{rem}[Shadow-type limit for the  model of early carcinogenesis.]
\label{rem:shadow:carc}
It is rather standard to show that  the following 
initial-Neumann boundary value problem for the 
reaction-diffusion-ODE system 
\begin{align}
&u_t=\Big(\frac{a uv}{1+uv} -d\Big) u &\text{for}& \quad x\in \Omega, \; t>0, \nonumber\\
&v_t =  D\Delta v  - v - v u^2  +\kappa_0 &
\text{for}&\quad  x\in \Omega,\; t>0, \nonumber\\
&\partial_n v(x,t)=0 &\text{for}& \quad x\in \partial \Omega, \; t>0, \label{rd:carc}\\
&u(x,0)=u_0(x)\geq 0&\text{for}& \quad x\in  \Omega, \nonumber\\
&v(x,0)=\v_0(x)\geq 0 &\text{for}& \quad x\in  \Omega\nonumber 
\end{align}
has a unique global-in-time solution for each $D>0$ and every initial condition
$u_0,v_0\in L^\infty (\Omega)$. For the proof of this claim, it suffices to follow the reasoning from papers \cite[Sec.~3]{MKS12} and  \cite{MKS16}, where such models have been discussed in detail.
Now, for a fixed initial datum $u_0,v_0\in L^\infty (\Omega)$, we are going to show that the family of solutions $(u,v)=(u^D,v^D)$ of problem \eqref{rd:carc}
is uniformly bounded with respect to $D>0$ on each finite time interval $[0,T]$,
 as required in condition \eqref{as:A}.
Indeed, applying to  first equation in \eqref{rd:carc}
 the inequality  $uv/(1+uv)\leq 1$ (valid for every  nonnegative 
solution $(u,v)$)
we obtain the differential inequality $u_t\leq (a-d)u$ which implies the estimate
\begin{equation}\label{u:rem}
0\leq u(x,t)\leq u_0(x)e^{(a-d)t} \qquad \text{for all} \quad x\in\Omega, \; t>0.
\end{equation} 
Next, by a comparison principle 
for parabolic equations, we obtain that $v(x,t)$ can be estimated from above 
by a solution $\v=\v(t)$ of the Cauchy problem 
$$
\frac{d}{dt} \v = -\v+\kappa_0, \quad \v(0)=\|v_0\|_\infty.
$$ 
Hence,
\begin{equation}\label{v:rem}
0\leq v(x,t)\leq \v(t)\leq \max \{\kappa_0, \|v_0\|_\infty\}
 \qquad \text{for all} \quad x\in\Omega, \; t>0.
\end{equation}
Both estimates \eqref{u:rem} and \eqref{v:rem} imply that the assumption 
\eqref{as:A} hold true. Hence, by Theorem \ref{thm:A}, solutions of the initial 
boundary value problem \eqref{rd:carc} converge as $D\to\infty$ toward the solution of problem \eqref{eqsc1}-\eqref{inisc} with $\xi_0=\int_\Omega v_0(x)\;dx$.
\end{rem}

\begin{rem}
It is well-known  that for a system of two reaction-diffusion equations
\begin{equation}\label{RD2}
u_t  = \varepsilon \Delta u+  f(u,v), \qquad v_t  =   D \Delta v+g(u,v),
\end{equation}
 with $\varepsilon>0$ and $D>0$, a regular perturbation problem is
obtained, under some conditions,  by passing to the limit $D\to\infty$.
The obtained system of a reaction-diffusion equation coupled to an
ordinary differential equation with a nonlocal term 
(as the one in \eqref{eqs2})
is exhibiting
dynamics qualitatively similar to that of  the original reaction-diffusion
system with the diffusion coefficient $D$ being large. It is called a {\it shadow
system} and it is an example of a model with nonlocal
kinetics. 
Shadow systems  have been introduced  by Keener  \cite{Keener78} and their 
properties have been established {\it e.g.}~in Ref.~\cite{Keener78, Nishiura82,
HaleSakamoto89, NPY01}. Analysis of shadow systems has provided
insights into dynamics of the activator-inhibitor model and of other
reaction-diffusion models under certain conditions
\cite{HaleSakamoto89}. The necessity of the conditions given 
in Ref.~\cite{HaleSakamoto89} is
highlighted by showing discrepancies between the dynamics of a shadow system
and the corresponding reaction-diffusion system in \cite{LiNi2009},
{\it i.e.} blow-up in finite time versus global existence.
Let us emphasize that, in this work, we consider the shadow approximation of system \eqref{RD2} with $\varepsilon =0$. Such systems give a singular limit of  reaction-diffusion models with small $\varepsilon>0$. Moreover, since they arise in modeling of processes with non-diffusing components, as described above, it is important to understand how their dynamics differ from dynamics of non-degenerated systems.
\end{rem}

\section{Instability of stationary solutions}\label{sec:instab}

A study of the general nonlocal initial value problem for the system 
\eqref{eqs1}-\eqref{eqs2}, namely, 
\begin{align}
&u_t  =   f(u,\xi),&     \text{for}\quad &x\in{\Omega}, \; t>0, && \label{Beq1}\\
&\xi_t  =   \int_\Omega g\big(u(x,t),\xi(t)\big)\;dx&  \text{for}\quad & t>0,&&\label{Beq2}\\
&u(x, 0)=u_0(x), \quad  \xi(0)=\xi_0 & & & & \label{Bini}
\end{align}
should begin by noticing 
that it has a unique local-in-time solution for every $u_0\in L^\infty(\Omega)$, $\xi_0\in\R$, and for arbitrary locally Lipschitz nonlinearities $f=f(u,\xi)$ and $g=g(u,\xi)$.  For the proof of this claim, it suffices to apply the Banach 
fixed point theorem to the following integral formulation of problem 
 \eqref{Beq1}-\eqref{Bini}
\begin{align}
u(x,t)&=u_0(x)+\int_0^t f\big(u(x,s),\xi(s)\big)\,ds,\label{int1}\\
\xi(t)&=\xi_0+\int_0^t\int_\Omega g\big(u(x,s),\xi(s)\big)\;dx\,ds\label{int2}
\end{align}
in order to obtain a solution $u\in C([0,T],L^\infty(\Omega))$ and
$\xi\in C([0,T])$ for some $T>0$ depending on  initial conditions and on nonlinearities. Then, a classical argument applied to system \eqref{int1}-\eqref{int2} allows us to show that, in fact,
$u(x,\cdot), \xi(\cdot)\in C^1([0,T])$ for every $x\in \Omega$.
Moreover,  if $u_0\in C(\Omega)$, then $u\in C([0,T]\times \Omega)$
(see {\it e.g.} \cite[Ch.~3]{Z86} for 
  results on differential equations in Banach spaces).

Our goal in this part of Appendix is to study stability properties of stationary solutions 
of the general nonlocal  system \eqref{Beq1}-\eqref{Beq2}. Here, a couple 
$(U,\bxi)\in L^\infty(\Omega)\times \R$ is called a stationary solution if
\begin{align}
f\big(U(x),\bxi\big)&=0    \qquad  \text{almost everywhere in $\Omega$,}\label{seq1}\\
\int_\Omega g\big(U(x),\bxi\big)\;dx&=0. \label{seq2}
\end{align}
 Now, if equation \eqref{seq1} can be solved (locally and not necessarily uniquely) 
with respect to $U(x)$, we obtain that $U$ has to be constant on a subset of $\Omega$. This is indeed  the case of the particular model
of early carcinogenezis 
 discussed by us in Section \ref{sec:stat}, where a characterization of all stationary solutions is possible.

Our main result on stationary solutions to the nonlocal  system \eqref{Beq1}-\eqref{Beq2} provides a~simple and natural condition under which a steady state is unstable.

\begin{theorem}[Instability of stationary solutions]\label{thm:instab}
Let $f=f(u,\xi)$ and $g=g(u,\xi)$ be arbitrary $C^2$-functions.
Assume that there exists $\Omega_1\subset \Omega$  with
$|\Omega_1|>0$, a constant $\u\in\R$, and  
 a solution $(U,\bxi)$
of  system \eqref{seq1}-\eqref{seq2} such that $U(x)=\u$ 
 for all $x\in\Omega_1$.
 If the autocatalysis condition holds, i.e. if
\begin{equation}\label{auto:constant}
f_u (\u, \bxi) > 0,
\end{equation}
 then $(U,\bxi)$ is  an unstable solution (in the Lyapunov sense) 
 of
 the nonlocal problem 
\eqref{Beq1}--\eqref{Bini}.
\end{theorem}

\begin{proof}
 In conformity with regular practice, we consider an initial value problem for the perturbation
$w(x,t)=u(x,t)-U(x)$ and  $\eta(t)=\xi(t) -\bxi,$
where $(u,\xi)$ is a solution of the nonlocal  problem \eqref{Beq1}-\eqref{Bini}
and $(U,\bxi)$ is a stationary solution satisfying the assumptions of 
Theorem~\ref{thm:instab}.
Thus, the couple $z=(w,\eta)$ is a solution of the initial value problem
\begin{equation}\label{perturb}
z_t=\L z+\mathcal{N}(z),\qquad z(0)=z_0\equiv(u_0-U, \xi_0-\bxi), 
\end{equation}
where 
\begin{equation}\label{L}
\L z=\L
\left(
\begin{array}{c}
w(x)\\
\eta
\end{array}
\right)
\equiv
\left(
\begin{array}{c}
f_u\big(U(x),\bxi\big)w(x)+f_\xi\big(U(x),\bxi\big)  \eta\\
\int_\Omega g_u\big(U(x),\bxi\big)w(x)\,dx+ \int_\Omega g_\xi \big(U(x),\bxi\big) \eta\,dx
\end{array}
\right)
\end{equation}
and $\mathcal{N}$ is a nonlinear term obtained in a usual way via the Taylor expansion 
from the nonlinearities in system \eqref{Beq1}-\eqref{Beq2}.

The linear operator $\L: L^\infty(\Omega)\times \R \to L^\infty(\Omega)\times \R$ is bounded,
hence, it generates a strongly continuous semigroup (in fact, a group) 
of linear operators on the Banach space
$X=L^\infty(\Omega)\times \R$ equipped with the usual norm
$\|(w,\eta)\|_X\equiv \|w\|_{L^\infty(\Omega)}+|\eta|$.

Now, we show that the number $\lambda_0=f_u(\u,\bxi)>0$ ({\it cf.} assumption \eqref{auto:constant}) is an eigenvalue of $\L$. 
To do it, it suffices to check that  
$
\bar z =(w_0, 0)
$
is the corresponding eigenvector for every non-trivial  $w_0\in L^\infty(\Omega)$
satisfying  $\int_{\Omega_1} w_0(x)\,dx=0$ and $w_0(x)=0$ for all $x\in \Omega\setminus \Omega_1$. One can always construct such bounded, non-trivial function $w_0$ due to the condition
$|\Omega_1|>0$.  Thus, by the assumptions on $U(x)$, we have
$$
\int_\Omega g_u\big(U(x),\bxi\big)w(x)\,dx=
g_u\big(\u,\bxi\big)\int_{\Omega_1} w(x)\,dx=0
$$
and, consequently, we obtain
$\L(w_0,0)^{T}=\lambda_0(w_0,0)^{T}$.

Finally, since $U$ is a bounded function,
using the Taylor expansion 
of the $C^2$-functions $f=f(u,\xi)$ and $g=g(u,\xi)$
we find  two constants $R>0$ and $C>0$ such that the the nonlinear term $\mathcal{N}$ in
\eqref{perturb} satisfies
$$
\|\mathcal{N}(z)\|_{X} \leq C\|z\|_{X}^2\qquad \text{for all}\quad \|z\|_X\leq R.
$$ 

We have thus checked all assumptions of \cite[Theorem 1]{SS00} which assure that the zero solution
of the initial value problem \eqref{perturb} is nonlinearly unstable
in the Laypunov sense. 
\end{proof}




\section{Blowup of solutions in finite time}\label{app:blowup}

A nonlocal effect caused by the integral over $\Omega$ in system \eqref{eqs1}-\eqref{eqs2} may lead not only to the instability of   steady states, but also to a blow-up of space-heterogeneous solutions, even in the case when space homogeneous solutions are global-in-time and uniformly bounded on the time half-line 
$[0,\infty)$.
In this part of Appendix, we describe  this phenomenon 
in the  case of a particular nonlocal  problem with 
a well-known nonlinearity from 
 mathematical
biology. 
More precisely, we consider a nonlocal  problem with  the nonlinearity
 as in the celebrated
 Gray-Scott system describing pattern formation in chemical reactions ~\cite{GrayScott}:
\begin{align}
\label{eq1gs} u_t  &=  - (B+k) u + u^2 \xi&     \text{for}\quad &x\in{\Omega}, \; t>0, && \\
\label{eq2gs} \xi_t  &=   -\xi \int_\Omega u^2 \,dx+B(1-\xi)&  \text{for}\quad & t>0,&&
\end{align}
where  $B$ and $k$ are positive constants.
As before we assume $|\Omega|=1$, hence, this is a particular case of system 
\eqref{eqs1}-\eqref{eqs2} with
$f(u, \xi)=- (B+k) u + u^2 \xi$ and 
  $g(u,\xi)=-\xi u^2 +B(1-\xi)$. 

Let us first formulate preliminary properties of solutions to the initial value problem for system
\eqref{eq1gs}-\eqref{eq2gs}.

\begin{prop}\label{prop:GS}
System \eqref{eq1gs}-\eqref{eq2gs} supplemented with an initial condition
$(u_0,\xi_0)\in L^\infty(\Omega)\times \R$ has a unique solution on an interval
$[0,T_{max})$ with certain maximal  $T_{max}\in (0,\infty]$. 
If $u_0\geq 0$ almost everywhere in $\Omega$ and $\xi_0\geq 0$, 
then $u(x,t)\geq 0$ almost everywhere in $\Omega$ and $\xi(t)\geq 0$ for all
$t\in [0, T_{max})$. 
For every nonnegative $\xi_0$, we have the estimate
\begin{equation}\label{xi:est}
0\leq \xi(t)\leq \max \{\xi_0, 1\}\qquad \text{for all} \quad t\in [0, T_{max}).
\end{equation}
\end{prop}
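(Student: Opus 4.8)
The plan is to prove the three assertions of Proposition~\ref{prop:GS} in sequence: local well-posedness, preservation of nonnegativity, and the a priori bound on $\xi$. Local existence and uniqueness on a maximal interval $[0,T_{\max})$ follow immediately from the general discussion preceding Theorem~\ref{thm:instab}, since $f(u,\xi)=-(B+k)u+u^2\xi$ and $g(u,\xi)=-\xi u^2+B(1-\xi)$ are locally Lipschitz (indeed $C^\infty$); the Banach fixed-point argument applied to the integral formulation \eqref{int1}--\eqref{int2} gives a unique solution $u\in C([0,T],L^\infty(\Omega))$, $\xi\in C([0,T])$, which extends to a maximal time. So the first sentence requires essentially no new work.

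For nonnegativity, the key observation is that equation \eqref{eq1gs} is linear in $u$ for each fixed $x$ once $\xi(\cdot)$ is regarded as a known continuous function. First I would fix $x$ and write $u_t=a(x,t)u$ with $a(x,t)=-(B+k)+u(x,t)\xi(t)$, which is continuous in $t$; solving gives $u(x,t)=u_0(x)\exp\!\big(\int_0^t a(x,s)\,ds\big)\geq 0$ whenever $u_0(x)\geq 0$. This pointwise representation holds for almost every $x$ and shows $u(x,t)\geq 0$ on $[0,T_{\max})$. For $\xi$, I would likewise treat \eqref{eq2gs} as a linear scalar ODE: writing $m(t)=\int_\Omega u^2\,dx\geq 0$, which is continuous in $t$ once $u$ is known, the equation reads $\xi_t=-\big(m(t)+B\big)\xi+B$, so $\xi(t)=\xi_0 e^{-\int_0^t(m+B)}+B\int_0^t e^{-\int_s^t(m+B)}\,ds\geq 0$ when $\xi_0\geq 0$, since both terms are manifestly nonnegative. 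Thus nonnegativity of the initial data propagates.

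For the upper estimate \eqref{xi:est}, I would use the same scalar ODE for $\xi$ together with the nonnegativity of $m(t)$ just established. Since $m(t)\geq 0$, the comparison $\xi_t=-m(t)\xi+B(1-\xi)\leq B(1-\xi)$ holds pointwise in $t$. If $\xi_0\leq 1$ then a standard comparison argument (or Gronwall) against the solution of $\dot{\zeta}=B(1-\zeta)$, $\zeta(0)=\xi_0$, which stays in $[\xi_0,1]$, yields $\xi(t)\leq 1$; if $\xi_0>1$, the same comparison gives $\xi(t)\leq \xi_0$ because $\zeta$ decreases monotonically toward $1$. In either case $\xi(t)\leq\max\{\xi_0,1\}$, and combined with $\xi(t)\geq0$ this gives \eqref{xi:est}. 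The cleanest formulation is to note $\frac{d}{dt}\big(\xi-\max\{\xi_0,1\}\big)\leq -\big(m(t)+B\big)\big(\xi-\max\{\xi_0,1\}\big)$ at any time where $\xi$ exceeds the bound, forcing the excess to remain nonpositive.

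The only point requiring a little care — and the place where rigor matters most — is the regularity of $m(t)=\int_\Omega u^2(x,t)\,dx$ needed to justify treating \eqref{eq2gs} as an ODE with continuous coefficients. I expect this to be the main (though modest) obstacle: one must confirm $t\mapsto m(t)$ is continuous, which follows from $u\in C([0,T_{\max}),L^\infty(\Omega))$ because on bounded time intervals $u$ stays bounded in $L^\infty$, so $u^2$ is continuous into $L^1(\Omega)$ and integration over the finite-measure set $\Omega$ preserves continuity. Once this is secured, all three claims reduce to elementary scalar-ODE comparison arguments, and no genuinely hard estimate is involved.
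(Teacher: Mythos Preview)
Your proposal is correct and follows essentially the same approach as the paper, which in fact skips the detailed proof as ``completely standard'' and only records the key observation that the upper bound in \eqref{xi:est} follows from the differential inequality $\xi_t\le B(1-\xi)$ obtained from \eqref{eq2gs} once $\xi\ge 0$. Your write-up simply fills in the details the authors omit---the exponential-formula argument for nonnegativity and the continuity of $m(t)$---and is more complete than what appears in the paper.
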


We skip the proof of this proposition because it is completely 
analogous to the proof of Proposition \ref{thm:existence:carc}.
Here, let us only mention that the upper bound  \eqref{xi:est} 
is an immediate consequence of the differential inequality 
$\xi_t\leq B(1-\xi)$ which is obtained from \eqref{eq2gs} with nonnegative $\xi(t)$.

\begin{rem}\label{rem:carc:stab}
We skip the discussion of stability properties of  stationary solutions 
the nonlocal  system  
\eqref{eq1gs}-\eqref{eq2gs}, because it is completely analogous 
to that one 
in Appendix \ref{sec:instab}, 
in the case of model \eqref{eqsc1}-\eqref{eqsc2}.
Here, let us only mention that piecewise constant 
stationary solutions exist and they are all unstable 
(because an autocatalysis condition is satisfied)
except  
the trivial  steady state  $(U,\bxi)=(0,1)$.
\end{rem}

Since all nontrivial stationary solutions are unstable, a question arises as to what is the long-time behavior of (large) solutions to the initial value problem for system \eqref{eq1gs}-\eqref{eq2gs}.
First, we emphasize in the following corollary   that space homogeneous nonnegative solutions 
({\it i.e} when $u$ does not depend on $x$)
are global-in-time and bounded. 
Here, we recall that such solutions satisfy the corresponding 
system of ordinary differential 
equations  under
our standing assumption $|\Omega|=1$.

\begin{prop}\label{prop:k:sss}
All  solutions $(u,\xi)=\big(u(t),\xi(t)\big)$ of the following initial value problem
for ordinary differential equations
\begin{align}
&\frac{d}{dt}u  =  -(B+k)u+u^2\xi,  \qquad \frac{d}{dt}\xi  =  - \xi u^2+B(1-\xi)\label{ksss}\\
&u(0)  =   u_{0}\geq 0,\qquad \xi(0)  =  \xi_{0}\geq 0 \label{kinisss}
\end{align}
are nonnegative, global-in-time, and uniformly bounded for $t>0$. 
\end{prop}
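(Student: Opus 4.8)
The plan is to regard \eqref{ksss}--\eqref{kinisss} as a planar autonomous system and to control its solutions through a linear combination of $u$ and $\xi$ in which the dangerous quadratic term cancels. First I would record the nonnegativity, which is the space-homogeneous case of Proposition \ref{prop:GS} but can also be read off directly: writing the first equation as $\dot u = u\big(-(B+k)+u\xi\big)$ shows that $\{u=0\}$ is invariant, so $u(t)=u_0\exp\!\big(\int_0^t(-(B+k)+u(s)\xi(s))\,ds\big)\ge 0$; and rewriting the second equation as $\dot\xi = B-\xi(u^2+B)$ shows that $\dot\xi=B>0$ whenever $\xi=0$, so $\xi(t)\ge0$. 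The upper bound $0\le\xi(t)\le\max\{\xi_0,1\}$ of \eqref{xi:est} then follows from $\dot\xi\le B(1-\xi)$ by comparison.

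The key step is the observation that the autocatalytic term $u^2\xi$ in the $u$-equation is cancelled by the term $-\xi u^2$ in the $\xi$-equation upon adding the two equations. Setting $S(t)=u(t)+\xi(t)$, I would compute
\[
\dot S = -(B+k)u+u^2\xi-\xi u^2+B(1-\xi) = B-(B+k)u-B\xi .
\]
Using $u,\xi\ge0$ together with $k>0$ one has $-(B+k)u-B\xi\le -B(u+\xi)=-BS$, hence
\[
\dot S \le B(1-S).
\]
Comparison with the solution of $\dot\phi=B(1-\phi)$, $\phi(0)=S_0:=u_0+\xi_0$, gives $S(t)\le\max\{u_0+\xi_0,\,1\}$ on the whole interval of existence. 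Since $0\le u\le S$ and $0\le\xi\le S$, both components are bounded by $\max\{u_0+\xi_0,\,1\}$, uniformly in $t$.

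Finally, global existence follows from the standard blowup alternative: the a priori bound just obtained holds on $[0,T_{max})$, so if $T_{max}<\infty$ the solution would stay bounded up to $T_{max}$ and could be continued past it, contradicting maximality; hence $T_{max}=\infty$ and the bound is uniform on $[0,\infty)$.

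The only genuinely delicate point is spotting the cancellation in $\dot S$. Taken on its own, the $u$-equation is a Riccati-type equation whose quadratic term $u^2\xi$ strongly suggests finite-time blowup, and it is precisely the negative feedback through $\xi$ — made transparent by passing to the variable $u+\xi$ — that rules it out. Everything else is a routine comparison argument; I would only double-check that the inequality $-(B+k)u\le -Bu$ used above requires nothing more than $k\ge0$ and $u\ge0$, so that the estimate is robust.
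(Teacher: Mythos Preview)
Your proof is correct and follows essentially the same route as the paper: the key step in both is to add the two equations so that the quadratic term $u^2\xi$ cancels, yielding the differential inequality $\frac{d}{dt}(u+\xi)\le -B(u+\xi)+B$, from which boundedness of $u+\xi$ (and hence of each component) follows by comparison. Your treatment is somewhat more explicit about nonnegativity and the blowup alternative, but the argument is the same.
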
 

\begin{proof}
The proof of this proposition  is completely standard if we observe that all nonnegative
solutions of problem \eqref{ksss}-\eqref{kinisss} satisfy the relation
$$
\frac{d}{dt} \big( u(t) + \xi(t)   \big)
=-(B+k)u(t)-B\xi(t)+B\leq 
-B\big(u(t)+\xi(t)\big)+B.
$$ 
Hence, as long as  $u$ and $\xi$ stay nonnegative,  the sum 
$u(t)+\xi(t)$ has  to be  bounded 
on the half line $[0,\infty)$.
\end{proof}


Our main result on system \eqref{eq1gs}-\eqref{eq2gs} 
ascertains
 that 
 a space inhomogeneity of  initial data may leads not only to instability but also to 
a   blow-up in finite time of the corresponding solution.

\begin{theorem}\label{thm:sss}
Fix $x_0\in \Omega$ and assume  that $u_0\in L^\infty (\Omega)$ satisfies
\begin{equation}\label{as:u0:1}
 0\leq u_0(x)<u_0(x_0)\qquad \text{for all}\quad 
x\neq x_0
\end{equation}
and 
\begin{equation}\label{as:u0:2}
A_0\equiv \int_\Omega \left(\frac{u_0(x_0)u_0(x)}{u_0(x_0)-u_0(x)}\right)^2\;dx<\infty.
\end{equation}
Assume also that
\begin{equation}\label{as:xi:blow}
\frac{1}{B+k}\min \left\{\xi_0, \frac{B}{A_0+b}\right\}>\frac{1}{u_0(x_0)}.
\end{equation}
Then, the corresponding solution of system 
\eqref{eq1gs}-\eqref{eq2gs} supplemented with the initial conditions $u(x,0)=u_0(x)$
and $\xi(0)=\xi_0$
blows up in a finite time at $x_0$  in the following sense.
There exists $T_{max}\in (0,\infty)$ such that 
\begin{itemize}
\item the solution $\big(u(x,t), \xi(t)\big)$ exists on $\Omega\times [0,T_{max})$
and it is continuous on $[0,T_{max})$ for every fixed $x\in\Omega$;
\item $u(x_0,t)$ blows up at $T_{max}$:
$
u(x_0,t)\to +\infty \quad \text{when}\quad t\to T_{max},
$
\item
 the following estimates hold true for all 
$ (x,t)\in (\Omega\setminus \{x_0\})\times [0,T_{max})$:
\begin{equation}\label{u:est:blow}
0\leq u(x,t)\leq \frac{u_0(x_0)u_0(x)e^{-t(B+k)}}{u_0(x_0)-u_0(x)} 
\end{equation}
and
\begin{equation}\label{xi:est:blow}
  \min \left\{\xi_0, \frac{B}{A_0+B}\right\}   \leq \xi(t)\leq \max \left\{\xi_0, 1\right\}.
\end{equation}
\end{itemize}
\end{theorem}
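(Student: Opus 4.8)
The plan is to exploit that, for each fixed $x$, equation \eqref{eq1gs} is a scalar Bernoulli-type ODE in $u$ whose reciprocal linearizes the dynamics. Wherever $u(x,t)>0$, dividing \eqref{eq1gs} by $-u^2$ shows that $Y(x,t)=1/u(x,t)$ satisfies the linear equation
$$
Y_t=(B+k)Y-\xi(t).
$$
The crucial point is that the $\xi$-dependent source is \emph{the same} for every $x$, so the difference $Y(x,t)-Y(x_0,t)$ solves the homogeneous equation and is therefore explicit:
$$
\frac{1}{u(x,t)}-\frac{1}{u(x_0,t)}=\left(\frac{1}{u_0(x)}-\frac{1}{u_0(x_0)}\right)e^{(B+k)t}.
$$
Assumption \eqref{as:u0:1} makes the bracket strictly positive for $x\neq x_0$, and $1/u(x_0,t)>0$ as long as the solution exists; discarding the positive term $1/u(x_0,t)$ yields exactly the upper bound \eqref{u:est:blow}. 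This step uses \emph{no} information about $\xi$, which is what dissolves the apparent circularity of the argument.

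First I would record the lower bound for $\xi$. Integrating \eqref{u:est:blow} over $\Omega$, using that the single point $x_0$ has measure zero together with the finiteness assumption \eqref{as:u0:2}, I get $\int_\Omega u^2(x,t)\,dx\leq A_0$ throughout the existence interval. Substituting this into \eqref{eq2gs} and using $\xi\geq 0$ from Proposition \ref{prop:GS} gives the differential inequality $\xi_t\geq B-(A_0+B)\xi$, and comparison with the linear equation $\eta_t=B-(A_0+B)\eta$ produces $\xi(t)\geq \min\{\xi_0,\,B/(A_0+B)\}=:c>0$. The matching upper bound is already contained in Proposition \ref{prop:GS}, so \eqref{xi:est:blow} follows.

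Next I would drive the point $x_0$ to blow up. Feeding $\xi(t)\geq c$ into the reciprocal equation at $x=x_0$ gives $\partial_t\big(1/u(x_0,t)\big)\leq (B+k)/u(x_0,t)-c$. Comparing with the scalar ODE $\bar Y_t=(B+k)\bar Y-c$ started from $\bar Y(0)=1/u_0(x_0)$, whose solution is
$$
\bar Y(t)=\frac{c}{B+k}+\left(\frac{1}{u_0(x_0)}-\frac{c}{B+k}\right)e^{(B+k)t},
$$
I invoke assumption \eqref{as:xi:blow}, which says precisely that $1/u_0(x_0)<c/(B+k)$. Hence the coefficient of the growing exponential is negative, so $\bar Y$ strictly decreases and reaches zero at a finite time $t^\ast$. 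Since $0<1/u(x_0,t)\leq \bar Y(t)$ while the solution exists, the maximal time $T_{max}$ must satisfy $T_{max}\leq t^\ast<\infty$, with $1/u(x_0,t)\to 0$, i.e. $u(x_0,t)\to+\infty$, as $t\to T_{max}$.

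The main obstacle to watch is the logical ordering of the three bounds, since the lower bound on $\xi$ seems to need control of $\int_\Omega u^2$ while the blowup of $u(x_0,\cdot)$ seems to need the lower bound on $\xi$. This is resolved because \eqref{u:est:blow} for $x\neq x_0$ is $\xi$-free, coming solely from the homogeneous reciprocal-difference identity; the implications then run cleanly from \eqref{u:est:blow} to the $\xi$-bound to the blowup, with no genuine feedback. The remaining work is bookkeeping: verifying that all estimates persist up to $T_{max}$, that points with $u_0(x)=0$ (where the reciprocal is unavailable) satisfy \eqref{u:est:blow} trivially because both sides vanish, and that continuity of $t\mapsto\big(u(x,t),\xi(t)\big)$ on $[0,T_{max})$ for fixed $x$ follows from the local existence theory recorded at the start of Section \ref{sec:instab}.
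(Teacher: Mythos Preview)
Your proof is correct and follows essentially the same route as the paper: both exploit the Bernoulli structure of \eqref{eq1gs} to obtain the $\xi$-independent upper bound \eqref{u:est:blow}, use it to bound $\int_\Omega u^2$ by $A_0$ and hence bound $\xi$ from below, and then feed that lower bound back into the equation at $x_0$ to force finite-time blowup. The only cosmetic difference is that the paper writes down the explicit integral formula $u(x,t)=e^{-t(B+k)}\big/\big(1/u_0(x)-\int_0^t\xi(s)e^{-s(B+k)}\,ds\big)$ and phrases $T_{max}$ and the bounds in terms of $\int_0^t\xi(s)e^{-s(B+k)}\,ds$, whereas you work with the linearized reciprocal and the difference identity $1/u(x,t)-1/u(x_0,t)$; these are two presentations of the same computation.
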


Notice that, for an initial condition  described in Theorem \ref{thm:sss}, the 
corresponding  $u(x,t)$ escapes to $+\infty$ for $x=x_0$ as $t\to T_{max}$ and remains 
bounded for all other $x\in \Omega$. On the other hand, the function $\xi(t)$ is bounded and separated from zero on the interval
$[0, T_{max})$.

\begin{rem}\label{rem:sharp}
The number $A_0$ defined in \eqref{as:u0:2} is finite if, for example, 
there exist constants $C>0$ and $\ell \in (0, n/2)$ such that  
$u_0(x_0)-u_0(x)\geq C|x_0-x|^{\ell}$
(or equivalently, 
$u_0(x)\leq u_0(x_0)-C|x_0-x|^\ell$)
in a neighborhood of $x_0$.
If $u_0$ is a $C^2$-function and strictly concave, then we have $u_0(x_0)-u_0(x)\leq C|x_0-x|^{2}$
in a neighborhood of $x_0$, because $u_0$ has a global maximum at $x_0$. 
Thus, the constant $A_0$  in \eqref{as:u0:2} is finite in dimension $n\leq 4$ if
$u_0$ is  more ``sharp'' at the maximum point $x_0$ than  a 
$C^2$-function. 
However, our numerical simulations 
performed for a model of early carcinogenesis considered in Section~\ref{sec:carc}
suggest that 
such assumptions 
may not be optimal 
and an unbounded growth of spikes could be  possible for smooth initial 
conditions, as well. 
\end{rem}

\begin{proof}[Proof of Theorem \ref{thm:sss}]
By Proposition \ref{prop:GS},
  the solution $(u,\xi)$ of the initial value problem for system \eqref{eq1gs}-\eqref{eq2gs} exists on a maximal time interval $[0,T_{max})$ and it is nonnegative. Moreover, the function $\xi(t)$ satisfies the upper bound in \eqref{xi:est:blow} which is an immediate consequence of Proposition \ref{prop:GS}.

For fixed $\xi$ and for each $x\in \Omega$,
we solve equation \eqref{eq1gs} proceeding in the usual way: 
first, one should check that $w(x,t)=u(x,t)e^{t(B+k)}$ satisfies the
ordinary differential 
 equation $w_t=w^2\xi e^{-t(B+k)}$ with separate variables. Thus, the function $u$ 
 can be expressed via $\xi$ in the following way
\begin{equation}\label{u:form}
u(x,t)=\dfrac{e^{-t(B+k)}}{\dfrac1{u_0(x)}-\int_0^t \xi(s)e^{-s(B+k)}\;ds}.
\end{equation}
By assumption \eqref{as:u0:1}, we have 
$1/u_0(x)> 1/u_0(x_0)$ for all $x\in\Omega \setminus \{x_0\}$;
thus, it follows form formula \eqref{u:form}
that the solution $\big(u(x,t), \xi(t)\big)$  of  \eqref{eq1gs}-\eqref{eq2gs}
exists for all $t\in [0, T_{max})$, where
\begin{equation}\label{Tmax}
T_{max}= \sup \left\{t>0\;:\; \int_0^t \xi(s)e^{-s(B+k)}\;ds<\frac{1}{u_0(x_0)}\right\}.
\end{equation}
Our goal is to show that $T_{max}<\infty$.

First, applying the definition of $T_{max}$ from \eqref{Tmax}
in formula \eqref{u:form} we obtain the following estimate 
$$
u(x,t)\leq \dfrac{e^{-t(B+k)}}{\frac1{u_0(x)}-\frac1{u_0(x_0)}} = 
 \frac{u_0(x_0)u_0(x)e^{-t(B+k)}}{u_0(x_0)-u_0(x)} \qquad \text{for all} 
\quad (x,t)\in \Omega\times [0,T_{max})
$$
which gives inequality \eqref{u:est:blow}.
Next, 
using this estimate of $u(x,t)$ together with 
the inequality $e^{-t(B+k)}\leq 1$ 
we deduce from
equation \eqref{eq2gs} the  differential inequality
$$
\xi_t\geq  -\xi A_0 +B(1-\xi)
\qquad \text{for all} 
\quad t\in  [0,T_{max}),
$$
where the constant $A_0$ is defined in \eqref{as:u0:2}.
This inequality for $\xi(t)$
 implies that
$$
\xi(t)\geq \min \left\{\xi_0, \frac{B}{A_0+B}\right\}
\qquad \text{for all} 
\quad t\in  [0,T_{max}).
$$
Thus, we obtain the lower bound 
$$
\int_0^t \xi(s)e^{-s(B+k)}\;ds \geq 
\frac{1-e^{-t(B+k)}}{B+k}\min \left\{\xi_0, \frac{B}{A_0+B}\right\},
$$
where the  right-hand side is equal to $1/u_0(x_0)$ for some $t_0>0$
under  assumption \eqref{as:xi:blow}. 
In particular, the denominator of the fraction in
 \eqref{u:form} is equal to zero at $x=x_0$ and some $t_1\leq t_0$ and
this completes the proof that  $T_{max}<\infty$.
\end{proof}

\begin{rem}

In particular, we provide  an example, for which a nonlocal (long-range) ``diffusion''   leads to a blow-up of space heterogeneous solutions. 
In this way, we identify a large class of models with the {\it diffusion induced blow-up}
 in the same spirit as {\it e.g.}~in the works
\cite{PS97,MNY98}; see also 
the review article \cite{FN05}
and the chapter \cite[Ch.~33.2]{SQ07} for other references.

\end{rem}


\section*{Acknowledgments}
A.~Marciniak-Czochra and S. H\"arting were supported by European Research Council Starting Grant No 210680 ``Multiscale mathematical modelling of dynamics of structure formation in cell systems'' and Emmy Noether Programme of German Research Council (DFG). 
G.~Karch was  supported 
by the NCN grant No.~2013/09/B/ST1/04412. %
K.~Suzuki  acknowledges JSPS the Grant-in-Aid for Scientific Research (C) 26400156.

\end{document}